\tikzset{
>=stealth',
help lines/.style={dashed, thick}, axis/.style={<->}, important
line/.style={thick}, connection/.style={thick, dotted}, }
\newtheorem{theorem}{Theorem}[section]
\newtheorem{corollary}[theorem]{Corollary}
\newtheorem{lemma}[theorem]{Lemma}
\newtheorem{conjecture}[theorem]{Conjecture}
\newtheorem{question}[theorem]{Question}
\theoremstyle{definition}
\newtheorem{definition}[theorem]{Definition}
\newtheorem{remark}[theorem]{Remark}
\title[When Symplectic Topology Meets  Banach Space Geometry]{When  Symplectic Topology Meets  Banach Space Geometry}
\author[Yaron Ostrover]
{Yaron Ostrover
}
\begin{document}

\begin{abstract}
In this paper we survey some recent 
works that take the first steps toward establishing  
bilateral connections between symplectic geometry and 
several other fields, namely,  asymptotic geometric analysis, classical convex geometry, and the theory of normed spaces. 

\end{abstract}

\begin{classification} 53D35,  52A23, 52A40, 37D50, 57S05.
\end{classification}

\begin{keywords}
Symplectic capacities, Viterbo's volume-capacity conjecture, 
Mahler's conjecture, Hamiltonian diffeomorphisms, Hofer's metric. 
\end{keywords}

\maketitle

\section{Introduction}

 In the last three decades, 
symplectic topology has had an astonishing amount of  fruitful interactions with other fields of mathematics, including complex and algebraic geometry,  dynamical systems, 
Hamiltonian PDEs, transformation groups, 
and low-dimensional topology; 
as well as with physics, where, for example, symplectic topology plays a key role in the  rigorous formulation of mirror symmetry. 

\smallskip

In this survey paper, we present some recent works that take first steps toward establishing novel interrelations between symplectic geometry and 
several fields of mathematics, 
namely, asymptotic geometric analysis, classical convex geometry,  and the theory of normed spaces. 
In the first part of this paper (Sections~\ref{SEC:ISOPERIMTERIC} and~\ref{SEC:MAHLER}) we concentrate on the theory of symplectic measurements, 
which arose from the foundational work of Gromov~\cite{Gr} on pseudoholomorphic curves;  followed by the seminal works of Ekeland and Hofer~\cite{EH} and Hofer and Zehnder~\cite{HZ1} on variational theory 
in Hamiltonian systems, and Viterbo on generating  functions~\cite{V1}. This theory -- also known as the theory of ``symplectic capacities" -- lies nowadays at the core of symplectic geometry and topology.

\smallskip

In Section~\ref{SEC:ISOPERIMTERIC}, we focus on an open symplectic isoperimetric-type conjecture proposed  by Viterbo in~\cite{V}. It states that among all convex domains 
 with a given volume in the classical phase space ${\mathbb R}^{2n}$, the Euclidean ball has the maximal ``symplectic size" (see Section~\ref{SEC:ISOPERIMTERIC} below for the precise statement). 
In a collaboration with S. Artstein-Avidan and V. D.  Milman~\cite{AMO},   
we were able to prove an asymptotic version of Viterbo's conjecture, that is, we proved the conjecture up to a universal (dimension-independent) constant. This has been achieved by  adapting 
techniques from asymptotic geometric analysis and adjusting them
to a symplectic context, while working exclusively in the linear symplectic category.

\smallskip

The fact that one can get within a constant factor to the full conjecture using only linear embeddings is somewhat surprising from the symplectic-geometric point of view, as in symplectic geometry one typically 
needs highly nonlinear tools to estimate capacities.
However, this  fits perfectly into the philosophy of asymptotic geometric analysis.
Finding dimension independent estimates is a
frequent goal in this field, 
where surprising phenomena such
as concentration of measure (see e.g.~\cite{MilSch}) imply the existence of order and structures
in high dimensions, despite the huge complexity it involves. 
It would be interesting to explore whether similar phenomena also exist in the framework of symplectic geometry.
A natural important source for the study of the asymptotic  behavior (in the dimension) of symplectic invariants is the field of statistical mechanics,  
where one considers systems with a large number of particles, and the dimension of the phase space is twice the number of degrees of freedom. 
It seems that symplectic measurements were overlooked in this context so far.

In Section~\ref{SEC:MAHLER} we go in the opposite direction: we show how symplectic geometry could potentially be used to tackle  a 70-years-old fascinating open question
in convex geometry, 
known as the Mahler conjecture.
Roughly speaking, Mahler's conjecture states that the minimum of the product of the volume of a centrally symmetric convex body and the volume of its polar body is attained (not uniquely) for the hypercube. 
In a collaboration with S. Artstein--Avidan and R. Karasev~\cite{AKO}, we combined tools from symplectic geometry, classical convex analysis, and the theory of mathematical billiards,
and established a close relation between Mahler's conjecture and the above mentioned symplectic isoperimetric conjecture by Viterbo. More preciesly, we showed that Mahler's conjecture is equivalent to a special case of Viterbo's conjecture (see Section~\ref{SEC:MAHLER} for details). 

In the second part of the paper (Section~\ref{SEC:HOFER}),  we explain how methods from functional analysis can be used to address  questions regarding the 
 geometry of the group ${\rm Ham}(M, \omega)$ of Hamiltonian
diffeomorphisms   associated with a symplectic manifold $(M,\omega)$. 
One of the most striking facts regarding
this group, discovered by Hofer in~\cite{H}, is that it carries an intrinsic geometry given by a Finsler bi-invariant metric, nowadays 
known as Hofer's metric. This metric measures the time-averaged minimal oscillation of a
Hamiltonian function that is needed to generate a Hamiltonian diffeomorphism starting from the identity. 
Hofer's metric has been
intensively studied in the past twenty years, leading to many discoveries covering a wide
range of subjects from Hamiltonian dynamics to symplectic topology (see e.g.,~\cite{HZ, Mcd1,P1} and the references therein).
A long-standing question raised by Eliashberg and Polterovich in~\cite{EliP} is whether Hofer's metric is the only bi-invariant Finsler metric on the group ${\rm Ham}(M, \omega)$. 
Together with L. Buhovsky~\cite{BO}, and based on previous results by Ostrover and Wagner~\cite{OW}, we used methods from functional analysis and the theory of normed function spaces to  affirmatively answer this question. 
We proved that any non-degenerate bi-invariant Finsler metric on ${\rm Ham}(M, \omega)$, which is
generated by a norm that is continuous in the $C^{\infty}$-topology, gives rise to the same
topology on ${\rm Ham}(M, \omega)$ as the one induced by Hofer's metric.


\smallskip
As mentioned before, the outlined interdisciplinary connections described above  are just the first few steps in what seems to be a promising new direction. We hope that further exploration of these connections will strengthen the dialogue between these fields and symplectic geometry, and expand the range of methodologies alongside research questions that can be tackled through these means.
%
%
%

%
%

\smallskip

We end this paper with several open questions and speculations regarding some of the mentioned topics (see Section~\ref{SEC:OQ}).

\section{A Symplectic Isoperimetric Inequality} \label{SEC:ISOPERIMTERIC}

A classical result in symplectic geometry (Darboux's theorem) states that symplectic manifolds - in a sharp contrast
to Riemannian manifolds - have no local invariants (except, of course, the dimension). The first examples
of global symplectic invariants were introduced by Gromov in his seminal paper~\cite{Gr}, where
he developed and used pseudoholomorphic curve techniques to prove a striking symplectic rigidity result.
Nowadays known as Gromov's ``non-squeezing theorem", this result states that one cannot map a ball inside a thinner cylinder
by a symplectic embedding. 
This theorem paved the way to the introduction
of global symplectic invariants, called symplectic capacities which, roughly speaking, measure the
symplectic size of a set.

\smallskip

We will focus here on the case of the classical phase space ${\mathbb R}^{2n} \simeq {\mathbb C}^n$  equipped with the standard symplectic structure $\omega=dq \wedge dp$. 
We denote by $B^{2n}(r)$ the Euclidean ball of radius $r$, and by $Z^{2n}(r)$ the cylinder $B^{2}(r) \times {\mathbb C}^{n-1}$. 
Gromov's non-squeezing theorem asserts that if $r < 1$ there is no symplectomorphism $\psi$ of ${\mathbb R}^{2n}$ such that $\psi(B^{2n}(1)) \subset Z^{2n}(r)$.
The following definition, which crystallizes the notion of  ``symplectic size", was given by Ekeland and Hofer in their influential paper~\cite{EH}.

%

\smallskip

\noindent {\bf Definition:} 
A symplectic capacity on $({\mathbb R}^{2n},\omega)$ associates
to each  subset $U \subset {\mathbb R}^{2n}$ a number $c(U) \in
[0,\infty]$ such that the following three properties hold:

\smallskip

\noindent (P1) $c(U) \leq c(V)$ for $U \subseteq V$ (monotonicity);

\smallskip

\noindent (P2) $c \big (\psi(U) \big )= |\alpha| \, c(U)$ for  $\psi
\in {\rm Diff} ( {\mathbb R}^{2n} )$ such that $\psi^*\omega=
\alpha  \omega$ (conformality);

\smallskip

\noindent (P3) $c \big (B^{2n}(r) \big ) = c \big (Z^{2n}(r) 
 \big ) = \pi r^2$ (nontriviality and
normalization).

\smallskip

Note that (P3)  disqualifies any volume-related
invariant, while (P1) and (P2) imply that for $U, V \subset {\mathbb
R}^{2n}$, a necessary condition for the existence of a
symplectomorphism $\psi $ 
 with $\psi(U) = V$, is $c(U) =c(V)$ for any symplectic capacity $c$.

%

\smallskip

It is a priori unclear that symplectic capacities exist.
The above mentioned non-squeezing result  naturally leads to the definition of two symplectic capacities: 
the Gromov radius, defined by $\underline c(U)=\sup\{\pi r^2 \, | \, B^{2n}(r) \stackrel{\rm s} \hookrightarrow U \} $; and the 
cylindrical capacity, defined by $\overline c(U) = \inf\{\pi r^2  \, | \, U \stackrel{\rm s} \hookrightarrow Z^{2n}(r) \} $, where $\stackrel{\rm s} \hookrightarrow$ stands for symplectic embedding. 
It is easy to verify that these two capacities are the smallest and largest  possible symplectic capacities, respectively. 
Moreover, it is also known that the existence of a single capacity readily implies Gromov's non-squeezing theorem, as well as 
the Eliashberg-Gromov $C^0$-rigidity theorem, which states that for any closed symplectic manifold $(M,\omega)$,  the  symplectomorphism group ${\rm Symp}(M,\omega)$  is $C^0$-closed in the group  of all  diffeomorphisms of $M$  (see e.g.,  Chapter 2 of~\cite{HZ}). 

\smallskip

Shortly after Gromov's work, other symplectic capacities were constructed,
such as the Hofer-Zehnder~\cite{HZ} and the Ekeland-Hofer~\cite{EH} capacities, the displacement energy~\cite{H},  the Floer-Hofer capacity~\cite{FH,FHW},
spectral capacities~\cite{FGS,Oh,V1}, and, more recently, Hutchings's embedded contact homology capacities~\cite{Hu1}.
Nowadays, symplectic capacities are among the most fundamental objects in symplectic geometry,
and are the subject of intensive research efforts (see e.g.,~\cite{Hu2,IrirKei1, L, LMT, LMS, Lu, Mcd2,McSch,Schle}, and~\cite{CHLS} for a recent detailed survey and more references).
 However, in spite of the rapidly accumulating
knowledge regarding symplectic capacities, they are 
notoriously difficult
 to compute, and
there are no general methods even to effectively estimate them.
%

\smallskip


In~\cite{V}, Viterbo investigated the relation between the
symplectic way of measuring the size of sets using symplectic
capacities, and the classical  approach using  volume. Among many other inspiring results, in that work he
conjectured that in the class of convex bodies in ${\mathbb R}^{2n}$
with fixed volume, the Euclidean ball $B^{2n}$ maximizes any
given symplectic capacity. More precisely,

\begin{conjecture}[Viterbo's volume-capacity inequality conjecture] \label{iso-per-conj} For
any convex body $K $ in ${\mathbb R}^{2n}$ and any symplectic
capacity $c$,
\begin{equation*} \label{AAMO-result} {\frac {c(K)} {c(B)}} \leq  \left  (   {\frac {{\rm Vol}(K)}
{{\rm Vol}(B)}} \right )^{1/n}, \ \ {\rm where} \ B = B^{2n}(1).
\end{equation*}
\end{conjecture}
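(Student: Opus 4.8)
The plan is to fix the right-hand side once and for all using the normalization, and then reduce the statement for an arbitrary capacity to a statement about a single extremal capacity. Since every symplectic capacity $c$ satisfies $c(B)=\pi$ by (P3), and since the cylindrical capacity $\overline c$ is the pointwise largest symplectic capacity, monotonicity (P1) gives $c(K)\le\overline c(K)$ for every capacity $c$ and every convex body $K$. Hence it suffices to prove the single inequality
\begin{equation*}
\overline c(K) \le \pi \left( \frac{{\rm Vol}(K)}{{\rm Vol}(B)} \right)^{1/n}, \qquad B = B^{2n}(1),
\end{equation*}
which collapses the whole family of conjectures indexed by $c$ into one sharp bound on the largest capacity.

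First I would try to replace the cylindrical capacity, which is defined through nonlinear embeddings, by an object with a concrete variational description. On smooth convex bodies the Ekeland--Hofer--Zehnder capacity $c_{EHZ}$ admits such a description: it equals the minimal symplectic action
\begin{equation*}
c_{EHZ}(K) = \min_\gamma \; \frac{1}{2} \oint \langle -J\dot\gamma, \gamma \rangle \, dt
\end{equation*}
taken over closed characteristics $\gamma$ on $\partial K$, and, dually, it can be rewritten purely in terms of the support function of $K$ through the mathematical-billiards reformulation underlying \cite{AKO}. The natural strategy is then to establish Viterbo's companion conjecture that all symplectic capacities coincide on convex bodies, which would identify $\overline c(K)$ with $c_{EHZ}(K)$ and thereby reduce the problem to the explicit action functional above.

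Granting this identification, the remaining task is a genuine isoperimetric optimization: among convex bodies of fixed volume, maximize $c_{EHZ}$. On the ball the minimal characteristic is a great circle, so $c_{EHZ}(B^{2n}(r)) = \pi r^2$ while ${\rm Vol}(B^{2n}(r)) = \pi^n r^{2n}/n!$, which makes the conjectured inequality sharp and singles out the ball as the expected maximizer. I would verify that the ball is a critical point of the ratio $c_{EHZ}(K)^n / {\rm Vol}(K)$ by a first-variation computation on the support function, and then attempt to upgrade criticality to a global maximum by a symmetrization or rearrangement that does not decrease $c_{EHZ}$ while controlling the volume.

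The hard part will be precisely this last step, together with the two identifications it rests on. Controlling the \emph{largest} capacity from above is exactly the direction in which symplectic geometry offers no general tools, since upper bounds on $\overline c$ demand explicit, highly nonlinear embeddings into thin cylinders; the equality-of-capacities conjecture invoked in the reduction is itself open and presumably of comparable depth. Moreover, no symmetrization is known to be monotone for $c_{EHZ}$, and in high dimensions the volume of a convex body concentrates away from the ball, so naive rearrangements break down. The equivalence established in \cite{AKO} between the Lagrangian-product special case of this inequality and Mahler's conjecture is a strong warning sign: already that special case is as intractable as a seventy-year-old open problem, so any successful argument must introduce a genuinely new mechanism rather than a routine variational or symmetrization step.
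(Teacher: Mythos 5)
The statement you set out to prove is Conjecture~\ref{iso-per-conj}, which is an \emph{open problem}: the paper contains no proof of it, so there is nothing to compare your attempt against except the partial results the paper does establish, namely the asymptotic version (Theorem~\ref{up-to-uni-constnat}, the conjecture up to a universal constant) and the equivalence of a special case with Mahler's conjecture (Theorem~\ref{THM-V-M}). Your opening reduction is correct and standard: by (P3) every capacity gives $c(B)=\pi$, and since $\overline c$ is the largest capacity it suffices to bound $\overline c(K)$; this is exactly the first step of the paper's proof of Theorem~\ref{up-to-uni-constnat}. Likewise your description of $c_{EHZ}$ as the minimal action over closed characteristics matches Theorem~\ref{Cap_on_covex_sets}. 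But from that point on the plan does not close, and in one place it is structurally backwards: to identify $\overline c$ with $c_{EHZ}$ on convex bodies you propose to ``establish Viterbo's companion conjecture that all symplectic capacities coincide on convex bodies'' --- that is Conjecture~\ref{conj-all-cap-coincide}, which the paper explicitly notes is \emph{stronger} than Conjecture~\ref{iso-per-conj} (it implies it immediately, since the volume bound holds trivially for the Gromov radius $\underline c$ by monotonicity). So your reduction trades the target for a strictly harder open problem, which is the opposite of progress.

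Even granting that identification, the remaining optimization step is unsupported: the first-variation computation at the ball would at best show criticality, and the proposed upgrade to a global maximum rests on a symmetrization monotone for $c_{EHZ}$ that, as you yourself concede, is not known to exist; moreover the Lagrangian-product case of exactly this optimization is equivalent to Mahler's conjecture (Theorems~\ref{THM-V-M} and~\ref{THM:CAP=4}), so that single step is at least as hard as a seventy-year-old open problem. To your credit you flag all of this candidly, but that makes the text a research outline with two open conjectures and one nonexistent tool as inputs, not a proof. It is worth noting that the furthest anyone has gotten --- Theorem~\ref{up-to-uni-constnat} --- uses an entirely different and purely linear mechanism: Lemma~\ref{lem-complex-symetric} settles the case $K=iK$ up to the factor $4/\pi$, and Milman's reverse Brunn--Minkowski inequality (Theorem~\ref{RBMIneq}), together with the Rogers--Shephard bound, reduces a general convex body to that case via $SK+iSK$ for a suitable linear symplectomorphism $S$; that route deliberately sacrifices the sharp constant, which is precisely what remains open.
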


Here and henceforth a convex body of ${\mathbb R}^{2n}$ is a compact convex set with non-empty interior. 
The isoperimetric inequality above was proved in~\cite{V} up to a
constant that depends linearly on the dimension using the classical John ellipsoid theorem. In a joint work
with S. Artstein-Avidan and V. D. Milman (see~\cite{AMO}), we made further progress towards the proof
of the conjecture. By customizing 
methods and
techniques from asymptotic geometric analysis and adjusting them to
the symplectic context, we were able to prove Viterbo's conjecture up
to a universal  (i.e., dimension-independent) constant. More
precisely, we proved that 
\begin{theorem} \label{up-to-uni-constnat} There is a universal constant $A$ such that for
any convex domain $K$ in ${\mathbb R}^{2n}$, and any symplectic
capacity $c$, one has
$$ {\frac {c(K)} {c(B)}} \leq A \, \left (   {\frac {{\rm Vol}(K)} {{\rm Vol}(B)}} \right )^{1/n}, \ \ {\rm where} \ B = B^{2n}(1).$$
\end{theorem}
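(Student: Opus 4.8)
The plan is to prove the inequality for the largest symplectic capacity and then invoke the hierarchy of capacities. Since every symplectic capacity $c$ satisfies $c \le \overline c$, where $\overline c$ is the cylindrical capacity, it suffices to establish the bound for $\overline c$ alone. Because both sides are homogeneous of degree $2$ under dilation (for the left-hand side this is property (P2) applied to $x \mapsto \lambda x$, where $\psi^*\omega = \lambda^2\omega$), I may normalize $K$ so that ${\rm Vol}(K) = {\rm Vol}(B)$. The goal then reduces to producing a single \emph{linear} symplectic map $\Psi \in {\rm Sp}(2n)$ and a radius $\rho = O(1)$, independent of the dimension, with $\Psi(K) \subseteq Z^{2n}(\rho)$; this yields $\overline c(K) \le \pi\rho^2 = O(1)\cdot c(B)$. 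Restricting to linear embeddings is precisely the regime in which a dimension-free constant is expected.

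The backbone of the argument is the model case of an ellipsoid, where the inequality in fact holds with constant $1$. Given an ellipsoid $\mathcal E$, Williamson's symplectic normal form provides $\Psi \in {\rm Sp}(2n)$ diagonalizing the associated quadratic form, so that $\Psi(\mathcal E)$ has symplectic radii $r_1 \ge \cdots \ge r_n > 0$. One then reads off
\[
  \overline c(\mathcal E) = \pi r_n^2, \qquad {\rm Vol}(\mathcal E) = {\rm Vol}(B)\prod_{j=1}^{n} r_j^2,
\]
the first equality because $\Psi(\mathcal E)$ projects onto its thinnest symplectic plane as a disk of radius $r_n$, and hence sits inside $Z^{2n}(r_n)$. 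As the minimum of the $r_j^2$ is at most their geometric mean, $r_n^2 \le \bigl(\prod_j r_j^2\bigr)^{1/n} = ({\rm Vol}(\mathcal E)/{\rm Vol}(B))^{1/n}$, which is exactly Viterbo's inequality for $\mathcal E$. This computation isolates the true content of the problem: the quantity to be controlled is the smallest symplectic width of $K$ over all linear symplectic frames, and for ellipsoids it is dominated by the volume radius.

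To pass from ellipsoids to an arbitrary convex body I would import the central tool of asymptotic geometric analysis, Milman's ellipsoid. I would fix an M-ellipsoid $\mathcal E$ of $K$, namely an ellipsoid with ${\rm Vol}(\mathcal E) = {\rm Vol}(K)$ and covering numbers $N(K,\mathcal E), N(\mathcal E,K) \le e^{Cn}$ for a universal constant $C$, and then apply the symplectic normal form of $\mathcal E$ as above. Because covering numbers are invariant under invertible linear maps, the covering relation survives after transporting both bodies by the same $\Psi$, and the ellipsoidal bound $\overline c(\Psi\mathcal E) = \pi r_n^2 \le \pi({\rm Vol}(K)/{\rm Vol}(B))^{1/n}$ becomes the target value one hopes to transfer to $\Psi(K)$.

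The main obstacle, and the heart of the matter, is this transfer. Symplectic capacities are neither monotone under coverings nor subadditive, so the bound on $N(K,\mathcal E)$ cannot be fed directly into $\overline c$; moreover M-position controls only volumetric data and \emph{not} the radial gauge (equivalently, the diameter) of $K$ relative to $\mathcal E$, which can grow with the dimension. One therefore cannot simply enclose $K$ in a bounded dilate of $\mathcal E$ and invoke monotonicity: projecting a covering of $\Psi(K)$ by $e^{Cn}$ translates of the thin ellipsoid $\Psi(\mathcal E)$ onto the thinnest symplectic plane only shows that this shadow has \emph{area} at most $e^{Cn}\pi r_n^2$, whereas a linear cylinder embedding requires the shadow to lie in an actual small \emph{disk}. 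Bridging this gap, from ``small area'' to ``small and round'', is where convexity and the full freedom of ${\rm Sp}(2n)$ must be used: I would upgrade the naive normal form to a symplectically adapted M-position, exploiting the liberty to recombine the $n$ symplectic planes and to apply symplectic shears, so as to guarantee that at least one projection of $\Psi(K)$ is genuinely inscribed in a disk of radius $O\bigl(({\rm Vol}(K)/{\rm Vol}(B))^{1/2n}\bigr)$. This step, rather than the ellipsoidal computation, is where the universal constant is truly earned, and I expect it to be the most delicate part of the proof.
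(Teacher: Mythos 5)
Your reduction to the cylindrical capacity, the homogeneity normalization, and the Williamson computation for ellipsoids are all correct, and they match the paper's own starting point (every capacity is dominated by $\overline c$, so it suffices to bound $\overline c$). You have also correctly diagnosed the obstruction: an M-ellipsoid controls covering numbers and volumes, not containments, so neither $K \subseteq C\mathcal{E}$ nor any roundness of a projection of $K$ can be extracted from it directly. But your proposed resolution --- passing to a ``symplectically adapted M-position'' by ``recombining the $n$ symplectic planes and applying symplectic shears'' so that some projection of $\Psi(K)$ lands in a small disk --- is not an argument; it is a restatement of what must be proved. The proposal therefore has a genuine gap at exactly the step you yourself flag as the one where the universal constant is earned, and nothing in the proposal indicates how to close it.

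The paper closes this gap with a different use of the same tool: Milman's ellipsoid enters through the reverse Brunn--Minkowski inequality rather than through covering numbers, and it is applied not to the pair $(K,\mathcal{E})$ but to the pair $(SK, iSK)$. Concretely: by Rogers--Shephard one may assume $K=-K$; after adjusting the reverse Brunn--Minkowski theorem to the symplectic category (this is possible because the positioning map in that theorem depends only on the individual body, and multiplication by $i$ preserves M-position), one finds a single $S\in{\rm Sp}(2n)$ with ${\rm Vol}(SK+iSK)\leq C^{2n}\,{\rm Vol}(K)$. The enlarged body $L=SK+iSK$ contains $SK$ and is invariant under multiplication by $i$, and for such $i$-invariant bodies the roundness you were trying to force comes for free: if $rB^{2n}$ is the inscribed ball of $L$ and $x$ a contact point, convexity traps $L$ between the supporting hyperplanes at $\pm x$, and $i$-invariance traps it between those at $\pm ix$, so the projection of $L$ to the complex line spanned by $x$ and $ix$ lies in a square of edge $2r$; hence $L$ embeds in a cylinder of capacity at most $4r^2\leq \frac{4}{\pi}\,\underline c(L)$ (the paper's Lemma~\ref{lem-complex-symetric}; even a purely linear inclusion of the square in a disk would do, at the cost of a factor $\pi/2$). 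Since Viterbo's inequality is trivial for the Gromov radius $\underline c$, chaining $\overline c(K)=\overline c(SK)\leq\overline c(L)\leq\frac{4}{\pi}\,\underline c(L)$ with the volume bound on $L$ yields the theorem. The idea missing from your plan is thus the symmetrization $K\mapsto SK+iSK$: it converts the unattainable goal ``find a linear symplectic frame in which a projection of $K$ is round'' into the attainable one ``find a frame in which a Minkowski sum has controlled volume,'' which is precisely what the M-ellipsoid machinery delivers.
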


We emphasize that 
in the proof of Theorem~\ref{up-to-uni-constnat}  we work exclusively in the category of linear symplectic geometry.
It turns out that even in this limited category of linear symplectic transformations, there are tools which are powerful enough to obtain a dimension-independent estimate as above.
While this fits with the philosophy of asymptotic geometric analysis, it is less expected from a
symplectic geometry point of view, where  one expects that 
highly nonlinear methods, such as folding and wrapping techniques (see e.g., the book~\cite{Schle}), would be required to effectively estimate symplectic capacities.

\smallskip

The proof of Theorem~\ref{up-to-uni-constnat} above is based on two ingredients. The first is the following simple geometric observation (see Lemma 3.3 in~\cite{AMO}, cf.~\cite{APB}).
\begin{lemma} \label{lem-complex-symetric} 
If a convex body  $K \subset {\mathbb C}^{n}$  satisfies $K=iK$, then  $\overline c(K) \leq {\frac 4 {\pi}} \, \underline c(K)$.
\end{lemma}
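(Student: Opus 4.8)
The plan is to squeeze the two extremal capacities against one another by exhibiting, on one side, a large Euclidean ball inside $K$, and on the other side a complex line onto which $K$ projects into a small planar region; the hypothesis $K=iK$ is precisely what couples these two estimates. First I would record that, since $i^2=-\mathrm{id}$, the assumption $K=iK$ forces $K=-K$, so $K$ is centrally symmetric and $0\in\mathrm{int}\,K$. Writing $\rho$ for the inradius of $K$, central symmetry lets me centre the largest inscribed ball at the origin, $B^{2n}(\rho)\subseteq K$; as the identity is a symplectomorphism, this already yields the lower bound $\underline c(K)\ge \pi\rho^{2}$.

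The key step is the choice of the complex line for the upper bound. Since $B^{2n}(\rho)$ is the maximal inscribed ball, it touches $\partial K$ at some point $x_0$, and there the supporting hyperplane of $K$ coincides with the tangent hyperplane of the sphere; with $u=x_0/\rho$ this reads $K\subseteq\{\langle x,u\rangle\le\rho\}$. Applying the isometry $i$ and using $K=iK$ together with $K=-K$, I would propagate this to the four inequalities $|\langle x,u\rangle|\le\rho$ and $|\langle x,iu\rangle|\le\rho$, valid for every $x\in K$. Let $\ell=\mathrm{span}_{\mathbb{R}}\{u,iu\}$, a complex line on which $\{u,iu\}$ is an orthonormal basis. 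The inequalities then say exactly that the orthogonal projection $\pi_\ell(K)$ lies in the square $Q=\{su+t\,iu:|s|,|t|\le\rho\}$ of side $2\rho$, so $\mathrm{area}(\pi_\ell(K))\le 4\rho^{2}$. This is the geometric heart of the matter, and it is where the complex symmetry is indispensable: for a general body one controls the width only in the single direction $u$, whereas $K=iK$ upgrades this to control over the entire complex line $\ell$.

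It then remains to turn this planar bound into a cylinder estimate. Because $\ell$ is a complex, hence symplectic, subspace, I have the orthogonal and symplectic splitting $\mathbb{C}^{n}=\ell\oplus\ell^{\perp}$ with $\ell^{\perp}=\ell^{\omega}$, and $K\subseteq \pi_\ell(K)\times\ell^{\omega}$. I would now invoke the two-dimensional fact that a convex planar region of area $a$ admits, for every $\varepsilon>0$, an area-preserving embedding into a disk of area $a+\varepsilon$ (in dimension two the symplectic obstruction to embedding into a disk is just area). Taking the product of such an embedding of $\pi_\ell(K)\subseteq\ell$ with the identity on $\ell^{\omega}$ gives a symplectic embedding of $K$ into a round cylinder $Z^{2n}(R)$ with $\pi R^{2}$ arbitrarily close to $\mathrm{area}(\pi_\ell(K))$, whence $\overline c(K)\le \mathrm{area}(\pi_\ell(K))\le 4\rho^{2}$. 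Combining this with the lower bound gives $\overline c(K)\le 4\rho^{2}=\frac{4}{\pi}\,\pi\rho^{2}\le\frac{4}{\pi}\,\underline c(K)$, as claimed. I expect the step demanding the most care to be this final reduction — justifying the equal-area planar embedding for the (convex) projection and checking that extending by the identity genuinely lands $K$ inside the round cylinder — since the extraction of the square is elementary once the correct line $\ell$ has been pinned down.
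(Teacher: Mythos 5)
Your proof is correct and follows essentially the same route as the paper's: a contact point $x_0$ of the largest centered inscribed ball, the symmetries $K=iK$ and $K=-K$ forcing the projection of $K$ onto the complex line ${\rm span}_{\mathbb R}\{x_0, ix_0\}$ into a square of side $2\rho$, and a two-dimensional area-preserving map (times the identity on the symplectic complement) sending that square into a disc, hence $K$ into a cylinder of capacity $4\rho^2$. Your write-up is in fact somewhat more careful than the paper's sketch, e.g.\ in deriving $K=-K$ from $K=iK$ and in handling the $\varepsilon$-loss in the planar embedding.
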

\begin{proof}[Sketch of Proof]
Let $rB^{2n}$ be the  largest multiple  of the unit ball contained in $K$, and let $x  \in \partial K \cap rS^{2n-1}$ be a contact point between the boundary of $K$ and the boundary of  $rB^{2n}$.
It follows from the convexity assumption that the body $K$ lies between the hyperplanes $x + x^{\perp}$
and $-x + x^{\perp}$. Moreover,  since $K=iK$, it lies also between $-ix + ix^{\perp}$ and $ix +
ix^{\perp}$. Thus, the projection of $K$ onto the plane spanned
by $x$ and $ix$ is contained in a square of edge length $2r$. This square  can be turned into a disc with area $4r^2$, after applying a non-linear
symplectomorphism which is essentially two-dimensional.  Therefore, $K$
is contained in a symplectic image of the cylinder $Z^{2n}(\sqrt{4/{\pi}}\,r)$, and the lemma follows.
\end{proof}
Since by monotonicity, Conjecture~\ref{iso-per-conj} trivially holds for the Gromov radius $ \underline c$, it follows from Lemma~\ref{lem-complex-symetric}  that
%
%
\begin{corollary} \label{COR:about-sym-bodies}
Theorem~\ref{up-to-uni-constnat} holds for 
convex bodies $K \subset {\mathbb C}^{n}$ such that $K=iK$.  

\end{corollary}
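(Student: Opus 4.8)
The plan is to sandwich an arbitrary symplectic capacity $c$ between the two extremal capacities $\underline c$ and $\overline c$, and then to exploit the hypothesis $K=iK$ through Lemma~\ref{lem-complex-symetric}. The starting point is the fact, recalled above, that the Gromov radius $\underline c$ and the cylindrical capacity $\overline c$ are respectively the smallest and the largest symplectic capacities, so that $\underline c(U)\le c(U)\le \overline c(U)$ for every subset $U$ and every capacity $c$. Moreover, the normalization axiom (P3) forces $c(B)=\pi$ for every capacity, and in particular $\underline c(B)=\overline c(B)=\pi$. Thus all symplectic capacities agree on the reference ball $B$, and the only quantity left to control is the ratio of $c(K)$ to $\pi$.

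As noted in the excerpt, the conjectured inequality holds trivially for the Gromov radius with constant one; concretely, this rests on the volume-preservation of symplectic embeddings. Indeed, if $B^{2n}(r)\stackrel{\rm s}\hookrightarrow K$, then $r^{2n}\,{\rm Vol}(B)={\rm Vol}(B^{2n}(r))\le {\rm Vol}(K)$, whence $\pi r^2\le \pi\,({\rm Vol}(K)/{\rm Vol}(B))^{1/n}$. Taking the supremum over all admissible radii $r$ yields
\[
\frac{\underline c(K)}{\underline c(B)}=\frac{\underline c(K)}{\pi}\le \left(\frac{{\rm Vol}(K)}{{\rm Vol}(B)}\right)^{1/n}.
\]

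It then remains only to bound $c(K)$ from above by a universal multiple of $\underline c(K)$, and this is exactly where the symmetry hypothesis enters. Since $K=iK$, Lemma~\ref{lem-complex-symetric} gives $\overline c(K)\le \frac{4}{\pi}\,\underline c(K)$. Combining extremality, the volume bound for $\underline c$, and the lemma, I would chain the estimates as
\[
\frac{c(K)}{c(B)}=\frac{c(K)}{\pi}\le \frac{\overline c(K)}{\pi}\le \frac{4}{\pi}\cdot\frac{\underline c(K)}{\pi}=\frac{4}{\pi}\cdot\frac{\underline c(K)}{\underline c(B)}\le \frac{4}{\pi}\left(\frac{{\rm Vol}(K)}{{\rm Vol}(B)}\right)^{1/n},
\]
which proves the corollary with the explicit dimension-independent constant $A=4/\pi$.

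The argument presents essentially no genuine obstacle once Lemma~\ref{lem-complex-symetric} is granted: all of the nonlinear symplectic input --- the folding of the $2r$-by-$2r$ square (the projection of $K$ onto the plane spanned by $x$ and $ix$) into a disc of the same area, which is what converts the coordinate-plane bound into a cylinder containment --- has already been absorbed into that lemma. What remains is merely the formal sandwich together with the volume-preservation of symplectomorphisms. The one point deserving slight care is the use of normalization (P3) at the ball $B$, since it is this that lets one replace every denominator $c(B)$ by the common value $\pi$ and thereby collapse the comparison into a single chain of inequalities; the dimension-independence of $A$ is then immediate, precisely because the constant $4/\pi$ produced by Lemma~\ref{lem-complex-symetric} is itself independent of $n$.
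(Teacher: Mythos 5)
Your proof is correct and follows essentially the same route as the paper: the conjecture holds for the Gromov radius $\underline c$ by volume preservation of symplectic embeddings, and Lemma~\ref{lem-complex-symetric} then bounds $\overline c(K)$ (hence any capacity, by extremality) by $\tfrac{4}{\pi}\,\underline c(K)$, yielding the constant $A=4/\pi$. The paper compresses all of this into one sentence, so your version simply makes explicit the sandwich $\underline c \le c \le \overline c$ and the volume argument that the paper abbreviates as ``by monotonicity.''
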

\smallskip

The second ingredient in the proof is  a profound result in asymptotic geometric analysis  discovered by V.D. Milman in the mid 1980's called the ``reverse Brunn-Minkowski inequality" (see~\cite{Mil,Milm1317}).
Recall that the classical Brunn-Minkowski
inequality states that if $A$ and $B$ are non-empty Borel
subsets of ${\mathbb R}^n$, then
$$ {\rm Vol} (A+B)^{1/n} \geq {\rm Vol}(A)^{1/n} + {\rm
Vol}(B)^{1/n},$$ 
where $A+B = \{x+y \, | \, x \in A, \, y \in B \}$ is the Minkowski sum. 
Although at  first glance it seems that one cannot
expect any inequality in the reverse direction (consider, e.g.,  two very long and thin ellipsoids pointing in orthogonal
directions in ${\mathbb R}^2$), it turns out that for convex bodies, if one allows for an extra choice of
``position'', i.e., a volume-preserving linear image of the bodies, then one can reverse the Brunn-Minkowski inequality up to a universal constant factor. 


\begin{theorem}[Milman's reverse Brunn-Minkowski inequality] \label{RBMIneq}
For any two convex bodies $K_1,K_2$ in ${\mathbb R}^n$, there exist linear volume preserving
transformations $T_{K_i}$ $(i=1,2)$, such that  for $\widetilde K_i = T_{K_i}(K_i)$ one has
$$ {\rm Vol} (\widetilde K_1+ \widetilde K_2)^{1/n} \leq  C \left ( {\rm Vol}(\widetilde K_1)^{1/n} + {\rm Vol}(\widetilde K_2)^{1/n} \right),$$
for some absolute constant $C$.
\end{theorem}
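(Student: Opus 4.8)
The plan is to deduce the inequality from a single deep structural fact — the existence, for every convex body, of a well-adapted ellipsoid known as its \emph{Milman ellipsoid} (or \emph{$M$-ellipsoid}) — together with elementary properties of covering numbers. For convex bodies $A, L \subset \mathbb{R}^n$ write $N(A,L)$ for the least number of translates of $L$ whose union covers $A$. Three elementary facts will carry the argument: (i) ${\rm Vol}(A) \le N(A,L)\,{\rm Vol}(L)$, since the covering uses $N(A,L)$ copies of $L$; (ii) covering numbers are submultiplicative under Minkowski addition, $N(A_1+A_2,\,L_1+L_2) \le N(A_1,L_1)\,N(A_2,L_2)$; and (iii) they are invariant under a common invertible linear map, $N(TA,TL)=N(A,L)$. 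Throughout, $D$ denotes the Euclidean unit ball of $\mathbb{R}^n$.

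The deep input is \emph{Milman's $M$-ellipsoid theorem}: there is a universal constant $c$ so that every convex body $K \subset \mathbb{R}^n$ carries an ellipsoid $\mathcal{E}_K$ with ${\rm Vol}(\mathcal{E}_K) = {\rm Vol}(K)$ and $\max(N(K,\mathcal{E}_K),\,N(\mathcal{E}_K,K)) \le e^{cn}$. Granting this, I would argue as follows. For $i=1,2$, since $\mathcal{E}_{K_i}$ is a linear image of a ball, I can choose a \emph{volume-preserving} linear map $T_{K_i}$ sending $\mathcal{E}_{K_i}$ to a Euclidean ball $R_i D$; set $\widetilde K_i = T_{K_i}(K_i)$. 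Fact (iii) turns the covering bounds into $\max(N(\widetilde K_i, R_i D),\,N(R_i D, \widetilde K_i)) \le e^{cn}$, and volume preservation gives $R_i^n\,{\rm Vol}(D) = {\rm Vol}(\mathcal{E}_{K_i}) = {\rm Vol}(\widetilde K_i)$, i.e. $R_i\,{\rm Vol}(D)^{1/n} = {\rm Vol}(\widetilde K_i)^{1/n}$.

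The whole point of passing to balls is that they add exactly: $R_1 D + R_2 D = (R_1+R_2)D$. Combining (i), (ii) and the covering bounds,
\[ {\rm Vol}(\widetilde K_1 + \widetilde K_2) \le N(\widetilde K_1+\widetilde K_2,\,(R_1+R_2)D)\,(R_1+R_2)^n\,{\rm Vol}(D) \le e^{2cn}\,(R_1+R_2)^n\,{\rm Vol}(D). \]
Taking $n$-th roots and inserting $R_i\,{\rm Vol}(D)^{1/n} = {\rm Vol}(\widetilde K_i)^{1/n}$ yields
\[ {\rm Vol}(\widetilde K_1 + \widetilde K_2)^{1/n} \le e^{2c}\,(R_1+R_2)\,{\rm Vol}(D)^{1/n} = e^{2c}\,({\rm Vol}(\widetilde K_1)^{1/n} + {\rm Vol}(\widetilde K_2)^{1/n}), \]
which is the asserted inequality with $C=e^{2c}$.

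The entire difficulty is thus concentrated in the $M$-ellipsoid theorem, and this is the step I expect to be the main obstacle. I would prove it by exhibiting a linear \emph{position} of $K$ — an \emph{$M$-position} — in which the ball $D$ itself may serve as $\mathcal{E}_K$, i.e. in which $N(K,D)$ and $N(D,K)$ are simultaneously at most $e^{cn}$. The natural tools are the mean-width parameters $M(K) = \int_{S^{n-1}} \|x\|_K\,d\sigma(x)$ and its polar counterpart $M^*(K)$, controlled through Sudakov's inequality and its dual, which bound the two covering numbers by $\exp(cn(M^*(K)/t)^2)$ and $\exp(cn(M(K)/t)^2)$ respectively. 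Choosing the scale $t$ a universal constant, what one needs is a volume-preserving position in which the product $M(K)\,M^*(K)$ is bounded by a \emph{universal} constant. Producing such a position is the heart of the matter: it rests on Pisier's bound for the $K$-convexity constant together with the duality of entropy numbers (yielding the \emph{regular $M$-position}), or on Milman's original iterative construction built upon the quotient-of-subspace theorem. The genuinely delicate point — and the reason this input is hard — is to obtain the clean bound $e^{cn}$ with $c$ independent of the dimension, with no logarithmic loss; any such loss would destroy the dimension-free constant that the reverse Brunn--Minkowski inequality demands.
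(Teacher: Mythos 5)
Your argument is correct: reducing the reverse Brunn--Minkowski inequality to the existence of $M$-ellipsoids via the three covering-number facts, and then passing to a volume-preserving position in which each ${\mathcal E}_{K_i}$ becomes a Euclidean ball $R_iD$ (so that $T_{K_i}$ depends only on $K_i$, and balls add exactly), is precisely the standard proof, and your deduction of the inequality with $C=e^{2c}$ is sound, with the genuine depth correctly isolated in the $M$-ellipsoid (equivalently, $M$-position) theorem. The paper itself states this theorem without proof, quoting it from Milman's work, and your proposal reconstructs essentially the argument given in those cited references.
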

We emphasize that the transformation $T_{K_i} (i=1,2)$ in Theorem~\ref{RBMIneq} depends solely on the body $K_i$, and not on the joint configuration of the bodies $K_1$ and $K_2$. For more details on the reverse Brunn-Minkowski inequality see~\cite{Milm1317,P1}.

\smallskip

We can now sketch the proof of Theorem~\ref{up-to-uni-constnat} (for  more details see~\cite{AMO}). Since every symplectic capacity is bounded above by the cylindrical capacity  $\overline c$, it is enough to prove the theorem for $\overline c$. For the sake of simplicity, we assume in what follows that $K$ is centrally symmetric, i.e., $K=-K$.  This assumption is not too restrictive, since by a classical result of Rogers and Shephard~\cite{RogShe} one has that ${\rm Vol}(K+(-K)) \leq 4^n {\rm Vol}(K)$. 
After adjusting Theorem~\ref{RBMIneq} to the symplectic context, one has that for
any convex body $K \subset {\mathbb R}^{2n}$, there exists a linear symplectomorphism $S \in {\rm Sp}(2n)$ such that $SK$ and $iSK$ satisfy
the reverse Brunn-Minkowski inequality, that is, the volume  ${\rm Vol}(SK + iSK)$ is less than some constant times ${\rm Vol}(K)$. 
Combining this with the properties of symplectic capacities and Corollary~\ref{COR:about-sym-bodies}, 
%
%
%
we conclude that 
$$  {\frac {\overline c(K)} {\overline c(B)}}  \leq  {\frac {\overline c(SK+iSK)} {\overline c(B)}}   \leq A \, \left  (   {\frac {{\rm Vol}(SK+iSK)} {{\rm Vol}(B)}} \right )^{\frac 1 n}  \leq  A'  \, \left (   {\frac {{\rm Vol}(K)} {{\rm Vol}(B)}} \right )^{\frac 1 n} ,$$
for some universal constant $A'$, and thus Theorem~\ref{up-to-uni-constnat}  follows.

%
%
%

%
\smallskip

%
%
%

%
%
%
%
%

In the next section we will show a surprising connection between Viterbo's volume-capacity conjecture and a seemingly remote open conjecture from the field of convex geometric analysis: 
the Mahler conjecture on the volume product of centrally symmetric convex bodies.


\section{ A Symplectic View on Mahler's Conjecture} \label{SEC:MAHLER}

Let $(X,\| \cdot \|)$ be an $n$-dimensional normed space and let
$(X^*,\| \cdot \|^*)$  be its dual space. Note that the product space $X
\times X^*$ carries a canonical symplectic structure, given by the
skew-symmetric bilinear form $\omega \bigl ( (x,\xi),(x',\xi') \bigr
) = \xi(x')-\xi'(x)$, and a canonical volume form, the {\it
Liouville} volume, given by $ \omega^n/n!$. A fundamental question
in the field of convex geometry, raised by Mahler in~\cite{Ma}, is to find  upper and lower bounds for the
Liouville volume of $B \times B^{\circ} \subset X \times X^*$, where
$B$ and $B^{\circ}$ are the unit balls of $X$ and $X^*$,
respectively. In what follows we shall denote this volume by
$\nu(X)$. 
The quantity $\nu(X)$ is an affine invariant of $X$, i.e. it
is invariant under invertible linear transformations. We remark that
in the context of convex geometry $\nu(X)$ is also known as the
{\it Mahler volume} or the {\it volume product} of $X$.

\smallskip

The Blaschke-Santal\'o inequality asserts that the maximum of
$\nu(X)$ is attained if and only if $X$ is a Euclidean space. This
was proved by Blaschke~\cite{Bl} for dimensions two and three, and
generalized by Santal\'o~\cite{Sa} to higher dimensions. 
The following sharp lower bound for $\nu(X)$ was
conjectured by Mahler~\cite{Ma} in 1939:


\begin{conjecture}[Mahler's volume product conjecture]\label{Mahler-conj}
For any
$n$-dimensional normed space $X$ one has $\nu(X) \geq 4^n/n!.$ 
\end{conjecture}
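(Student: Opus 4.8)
The plan is to attack Mahler's conjecture through the symplectic route advertised in this survey, reducing it to a sharp instance of Viterbo's volume--capacity inequality for a very specific family of convex bodies. Given a normed space $X = ({\mathbb R}^n, \|\cdot\|)$ with unit ball $B$ and dual unit ball $B^\circ$, I would consider the \emph{Lagrangian product} $B \times B^\circ \subset {\mathbb R}^n_q \times {\mathbb R}^n_p = {\mathbb R}^{2n}$, whose Liouville volume is exactly $\nu(X) = {\rm Vol}(B)\,{\rm Vol}(B^\circ)$. The key structural input is the billiard description of the Ekeland--Hofer--Zehnder capacity $c_{\mathrm{EHZ}}$ on convex bodies: for a Lagrangian product $K \times T$, the capacity equals the minimal length (measured in the norm whose unit ball is $T$) of a closed $(K,T)$-Minkowski billiard trajectory. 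This dynamical formula is what translates a purely convex-geometric quantity into a symplectic invariant.

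First I would carry out the computation $c_{\mathrm{EHZ}}(B \times B^\circ) = 4$, which holds for every norm: the two-bounce trajectory travelling between a boundary point $x \in \partial B$ and $-x$ is a closed Minkowski billiard trajectory, and the polarity relation $h_{B^\circ}(x) = \|x\|_B = 1$ pins its length to $4$; a short convexity argument shows no shorter closed trajectory exists. With this value in hand, the special case of Viterbo's inequality applied to $K = B \times B^\circ$ reads
\[
  c_{\mathrm{EHZ}}(B \times B^\circ)^n \;\le\; n!\,{\rm Vol}(B \times B^\circ),
\]
since ${\rm Vol}(B^{2n}(1)) = \pi^n/n!$ and $c_{\mathrm{EHZ}}(B^{2n}(1)) = \pi$. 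Substituting the value $4$ on the left turns this inequality into $4^n \le n!\,\nu(X)$, which is precisely the assertion $\nu(X) \ge 4^n/n!$ of Conjecture~\ref{Mahler-conj}. Thus the entire burden is shifted onto proving Viterbo's inequality for the Lagrangian products $B \times B^\circ$, and this reduction is in fact an equivalence: running the chain backwards shows Mahler for $X$ is the same statement as Viterbo for $B \times B^\circ$.

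To establish the required inequality unconditionally I would first recover a non-sharp version from the machinery already assembled in Section~\ref{SEC:ISOPERIMTERIC}: Milman's reverse Brunn--Minkowski inequality (Theorem~\ref{RBMIneq}) yields Viterbo up to a universal constant, hence $\nu(X) \ge c^n/n!$ for some absolute $c$, recovering the Bourgain--Milman bound. Upgrading the constant $c$ to the sharp value $4$ is the crux, and I would phrase it as a symplectic isoperimetric extremal problem: among all Lagrangian products of fixed capacity, minimize the Liouville volume, and prove that the minimum is attained at the products coming from $X = \ell^\infty_n$ (equivalently the Hanner polytopes). A natural line of attack is a symmetrization or shadow-system deformation decreasing the volume while controlling the capacity, together with an induction on the dimension that handles the unconditional case (where the sharp bound is classically known) and the planar case $n=2$ as base points confirming that the deformation targets the correct extremizer.

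The main obstacle, and the reason the conjecture has resisted proof, is that this extremal problem has \emph{no unique minimizer}: the whole family of Hanner polytopes, generated from intervals by alternating direct sums and polar duals, achieves the conjectured value $4^n/n!$. This degeneracy is what defeats the smooth variational and semigroup methods that succeed for the Blaschke--Santal\'o upper bound, whose extremizer is the single Euclidean ball; any strict-monotonicity or second-variation argument collapses along the flat directions connecting Hanner polytopes, so the proof must instead produce a genuinely sharp, rigidity-type inequality that certifies this entire non-smooth family simultaneously. Controlling the interaction between the billiard dynamics and the volume under the deformation, without losing the sharp constant, is exactly where I expect the argument to be hardest.
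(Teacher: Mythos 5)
There is a genuine gap, and it is the decisive one: the statement you are asked to prove is an \emph{open conjecture}, and the paper itself never proves it. What the paper does establish is exactly your first two paragraphs: the computation $c_{\mathrm{EHZ}}(B \times B^{\circ}) = 4$ (this is Theorem~\ref{THM:CAP=4}, obtained from the billiard formula of Theorem~\ref{Main-Theorem-From-AAO1} and the $2$-bounce minimality result behind Theorem~\ref{Main-Theorem-From-AKO}), and the conditional implication that Viterbo's Conjecture~\ref{iso-per-conj} then yields $\nu(X) \geq 4^n/n!$ (this is Theorem~\ref{THM-V-M}, including the equivalence with the special case of Viterbo for products $\Sigma \times \Sigma^{\circ}$). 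Your arithmetic here is correct, and even your side remark that the hard point is showing the $2$-periodic trajectory is the \emph{shortest} one matches the paper's own emphasis. Up to this point you have faithfully reconstructed the paper's reduction --- but a reduction of one open conjecture to another is not a proof of either.

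The gap is your third paragraph, which is a research program rather than an argument. Milman's reverse Brunn--Minkowski inequality (Theorem~\ref{RBMIneq}) does give Viterbo up to a universal constant (Theorem~\ref{up-to-uni-constnat}), and hence recovers a Bourgain--Milman-type bound $\nu(X) \geq c^n/n!$; that much is sound. But the upgrade from an absolute constant to the sharp constant $4$ is precisely the content of both open conjectures, and your proposed mechanism --- a symmetrization or shadow-system deformation decreasing volume while ``controlling the capacity,'' plus induction on dimension --- is asserted without any device for tracking $c_{\mathrm{EHZ}}$ (equivalently, the shortest Minkowski billiard length) along the deformation. No monotonicity of the capacity under such deformations is known, and you yourself identify the structural obstruction that defeats the plan: the extremizers form the entire non-smooth family of Hanner polytopes, so strict-monotonicity and second-variation arguments degenerate along the flat directions connecting them --- the same non-uniqueness the paper cites (following Tao) as the reason currently known optimization techniques cannot work. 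In short: paragraphs one and two reproduce the paper's Theorems~\ref{THM:CAP=4} and~\ref{THM-V-M}; paragraph three, where the actual proof of the statement would have to live, contains no proof, and the statement remains open.
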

%
%

The conjecture has been verified by Mahler~\cite{Ma} in the
two-dimensional case. In higher dimensions it is proved only in
a few special cases (see e.g.,~\cite{GMR,Kim,ME1,NPRZ, R1,R2, RSW,SR, St}). 
%
%
A major breakthrough towards answering Mahler's conjecture is a result 
due to Bourgain and Milman~\cite{BM}, who used sophisticated tools from functional analysis to
show that the conjecture holds asymptotically, i.e., up to a factor
$\gamma^n$, where $\gamma$ is a universal constant. 
This result has been re-proved later on, with entirely different methods, by Kuperberg~\cite{Ku}, using differential geometry, 
and independently by Nazarov~\cite{Naz},  using the theory of functions of several complex variables. A new proof using simpler asymptotic geometric analysis tools has been recently discovered by Giannopoulos, Paouris, and Vritsiou~\cite{GPV}. 
The best known constant today, $\gamma = \pi/4$, is due to Kuperberg~\cite{Ku}.


\smallskip

Despite great efforts to deal with the general case, a proof of Mahler's conjecture has been insistently elusive so far, and is currently the subject of intensive research.
A possible reason for this, as pointed out for example by Tao in~\cite{Tao}, is that, in contrast with the above mentioned Blaschke-Santal\'o inequality, the equality case
in Mahler's conjecture, which is obtained for example for the space
$l^n_{\infty}$ of bounded sequences with the standard maximum norm,
is not unique, and there are in fact many distinct extremizers for the (conjecturally) lower bound of $\nu(X)$ (see, e.g., the discussion in~\cite{Tao}).
This practically renders impossible any proof based on currently known optimisation techniques, and a radically different approach seems to be needed. 
%
%


\smallskip

We refer the reader to  Section~\ref{SEC:OQ} below for further discussion on the characterization of the equality case of Mahler's conjecture,  and its possible connection with symplectic geometry.

\smallskip

In a recent work with S. Artstein-Avidan and R. Karasev~\cite{AKO}, we combined  tools from symplectic geometry, convex analysis, and the theory of mathematical billiards, and  established a close relationship between 
Mahler's conjecture and Viterbo's volume-capacity conjecture. More precisely, we proved in~\cite{AKO} that
\begin{theorem} \label{THM-V-M} Viterbo's volume-capacity conjecture implies Mahler's conjecture.
\end{theorem}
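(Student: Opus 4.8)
The plan is to derive Mahler's lower bound by applying Viterbo's conjecture to one cleverly chosen convex body in $\mathbb{R}^{2n}$, namely the Lagrangian product $B \times B^{\circ} \subset X \times X^{*}$, where $B$ is the unit ball of the (symmetric) normed space $X$ and $B^{\circ}$ that of $X^{*}$. Two observations make this product the natural test body. First, by the very definition of the Liouville volume one has $\nu(X) = {\rm Vol}(B)\cdot {\rm Vol}(B^{\circ})$, so the Mahler volume is literally the volume of $B \times B^{\circ}$ viewed in $\mathbb{R}^{2n}$. Second, I would use the Ekeland--Hofer--Zehnder capacity $c_{EHZ}$, which on convex domains is a bona fide symplectic capacity (it satisfies (P1)--(P3), with $c_{EHZ}(B^{2n}(r)) = c_{EHZ}(Z^{2n}(r)) = \pi r^{2}$) and which turns out to take the same value on all such products. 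Since Viterbo's conjecture is asserted for \emph{every} symplectic capacity, I am entitled to apply it to $c = c_{EHZ}$.

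The technical heart of the argument is the identity
$$ c_{EHZ}(B \times B^{\circ}) = 4 \qquad \text{for every symmetric convex body } B. $$
To establish it I would invoke the correspondence between the boundary dynamics of a Lagrangian product and mathematical billiards: the closed characteristics on $\partial(B \times B^{\circ})$ project, in the position factor, to closed \emph{Minkowski billiard} trajectories inside $B$ whose reflection law is governed by the geometry body $B^{\circ}$, and the symplectic action $\oint p\,dq$ of a characteristic equals the length of the corresponding trajectory measured by the support function $h_{B^{\circ}}$. The polarity identity $h_{B^{\circ}}(v) = \|v\|_{B}$ then converts actions into $B$-gauge lengths. For the upper bound $c_{EHZ}(B \times B^{\circ}) \le 4$ it suffices to exhibit the two-bounce ``diameter'' trajectory $x \mapsto -x \mapsto x$ through a suitable antipodal pair $\pm x \in \partial B$: its length is $\|-2x\|_{B} + \|2x\|_{B} = 4\|x\|_{B} = 4$, and one verifies that it obeys the Minkowski reflection rule.

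Granting the identity, the deduction is immediate: with $B^{2n} = B^{2n}(1)$ one has $c_{EHZ}(B^{2n}) = \pi$ and ${\rm Vol}(B^{2n}) = \pi^{n}/n!$, so Viterbo's conjecture for $c_{EHZ}$ applied to $B \times B^{\circ}$ gives
$$ \frac{4}{\pi} = \frac{c_{EHZ}(B \times B^{\circ})}{c_{EHZ}(B^{2n})} \le \left( \frac{{\rm Vol}(B \times B^{\circ})}{{\rm Vol}(B^{2n})} \right)^{1/n} = \left( \frac{\nu(X)\, n!}{\pi^{n}} \right)^{1/n}, $$
and raising both sides to the power $n$ yields precisely $\nu(X) \ge 4^{n}/n!$. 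I expect the genuine obstacle to be the \emph{lower} bound $c_{EHZ}(B \times B^{\circ}) \ge 4$ --- the assertion that no closed characteristic, equivalently no closed Minkowski billiard path in $B$, can have $B^{\circ}$-length below $4$. This is exactly the direction that produces the lower bound on $\nu$, and it cannot be read off from a single test trajectory. I would attack it through a Clarke-type dual variational principle for $c_{EHZ}$ on convex bodies, reducing the claim to a convex-geometric minimization in which the polarity $h_{B^{\circ}} = \|\cdot\|_{B}$ forces every admissible closed loop to span $B$ in opposite directions and thereby to accumulate $B$-length at least $4$.
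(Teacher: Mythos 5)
Your proposal is correct and follows essentially the same route as the paper: the same test body $\Sigma \times \Sigma^{\circ}$, the same choice of the Ekeland--Hofer--Zehnder capacity, the same billiard characterization of $c_{_{\rm EHZ}}$ on Lagrangian products with the two-bounce diameter trajectory (this is the content of Theorems~\ref{Main-Theorem-From-AAO1} and~\ref{Main-Theorem-From-AKO}, which yield Theorem~\ref{THM:CAP=4}), and a final arithmetic deduction identical to the paper's proof of Theorem~\ref{THM-V-M}. You also correctly locate the genuine difficulty in the lower bound $c_{_{\rm EHZ}}(\Sigma \times \Sigma^{\circ}) \geq 4$, i.e., that no closed Minkowski billiard trajectory is shorter than the two-bounce one, which is precisely the step the paper delegates to~\cite{AKO}.
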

In fact, it follows from our proof that Mahler's conjecture is  equivalent to a special case of Viterbo's  conjecture, where the latter is restricted to the Ekeland-Hofer-Zehnder symplectic capacity, and to domains in the classical phase space of the form $\Sigma \times \Sigma^{\circ} \subset {\mathbb R}^{2n} = {\mathbb R}^n_q \times {\mathbb R}^n_p$   (for more details see~\cite{AKO}, and in particular Remark 1.9 ibid.). Here, $\Sigma \subset {\mathbb R}^n_q$ is a centrally symmetric convex body, the space ${\mathbb R}^n_p$ is identified with the dual space $({\mathbb R}^n_q)^*$, and $$\Sigma^{\circ} = \{ p \in  {\mathbb R}^n_p \, | \, p(q)\leq 1 \ {\rm for \ every } \ q \in \Sigma 
 \}$$

Theorem~\ref{THM-V-M} is a direct consequence of 
the following result proven in~\cite{AKO}.

\begin{theorem} \label{THM:CAP=4} There exists a symplectic capacity $c$  such that $ c(\Sigma \times \Sigma^{\circ})=4$ for every centrally symmetric convex body $\Sigma \subset {\mathbb R}^n_q$.
\end{theorem}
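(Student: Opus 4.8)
The plan is to take $c$ to be the Ekeland--Hofer--Zehnder capacity $c_{\mathrm{EHZ}}$ and to compute it on the Lagrangian product $\Sigma\times\Sigma^\circ$ by passing to a billiard problem. Recall that for a convex body $K\subset{\mathbb R}^{2n}$ the capacity $c_{\mathrm{EHZ}}(K)$ admits a variational description as the minimal action $\oint p\,dq$ carried by a closed characteristic on $\partial K$. The first issue I would address is that the product $\Sigma\times\Sigma^\circ$ is not smooth: its boundary is $(\partial\Sigma\times\Sigma^\circ)\cup(\Sigma\times\partial\Sigma^\circ)$ and has edges along $\partial\Sigma\times\partial\Sigma^\circ$. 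I would therefore work with generalized (piecewise smooth) characteristics and the subdifferential form of the action functional, so that the variational principle still applies and closed characteristics remain well defined for bodies whose boundaries are merely convex.

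The decisive reduction is a correspondence between closed characteristics on $\partial(\Sigma\times\Sigma^\circ)$ and closed billiard trajectories inside $\Sigma$. Writing a phase point as $(q,p)\in{\mathbb R}^n_q\times{\mathbb R}^n_p$, one checks that the characteristic dynamics forces $q$ to move along a straight segment inside $\Sigma$ while the momentum $p\in\partial\Sigma^\circ$ stays fixed, and then, upon $q$ reaching $\partial\Sigma$, forces $p$ to jump while $q$ is held fixed; this jump is exactly the reflection law of the Minkowski ($\Sigma^\circ$-)billiard in $\Sigma$. Since $dq=0$ along the reflection arcs, the action collapses to $\sum_j\langle p_j,\,q_{j+1}-q_j\rangle$, and because the segment direction $q_{j+1}-q_j$ is the outer normal to $\Sigma^\circ$ at $p_j$, duality gives $\langle p_j,q_{j+1}-q_j\rangle=\|q_{j+1}-q_j\|_\Sigma$, the gauge of $\Sigma$. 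Thus $c_{\mathrm{EHZ}}(\Sigma\times\Sigma^\circ)$ equals the $\Sigma$-length of the shortest closed $\Sigma^\circ$-billiard trajectory in $\Sigma$, and it remains only to show this minimal length equals $4$.

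I would establish the value $4$ in two halves. For the upper bound I would exhibit an explicit two-bounce orbit: pick $\xi\in\partial\Sigma^\circ$ supporting $\Sigma$ at a boundary point $q_0$, so that $\langle\xi,q_0\rangle=1$, and, using central symmetry, let the trajectory bounce between $q_0$ and $-q_0$ with momenta alternating between $\xi$ and $-\xi$. The reflection law holds at both contact points, and since $q_0\in\partial\Sigma$ gives $\|q_0\|_\Sigma=1$, the $\Sigma$-length is $\|2q_0\|_\Sigma+\|2q_0\|_\Sigma=4$, whence $c_{\mathrm{EHZ}}(\Sigma\times\Sigma^\circ)\le 4$. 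For the lower bound I would show that \emph{every} closed $\Sigma^\circ$-billiard trajectory has $\Sigma$-length at least $4$. The natural route is a convexity argument: using closedness to integrate by parts, the length $\sum_j\langle p_j,q_{j+1}-q_j\rangle$ is rewritten as $-\sum_j\langle p_j-p_{j-1},q_j\rangle$, a sum of terms of a definite sign governed by the reflection law, and one shows the trajectory must span $\Sigma$ so that a full back-and-forth crossing, of length $4$, is unavoidable. Central symmetry of $\Sigma$ (equivalently of $\Sigma^\circ$) is precisely what pins the extremal value at the dimension-independent constant $4$, rather than at a quantity depending on the shape of $\Sigma$.

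The main obstacle I expect is the lower bound together with the non-smoothness: one must rule out shorter trajectories uniformly over all centrally symmetric $\Sigma$ and with no regularity assumption on $\partial\Sigma$, so that bounce points and momenta must be controlled through subdifferentials rather than honest gradients, and the billiard correspondence itself must be justified in this generality. Once $c_{\mathrm{EHZ}}(\Sigma\times\Sigma^\circ)=4$ is proved, Theorem~\ref{THM-V-M} is immediate: applying Conjecture~\ref{iso-per-conj} to $K=\Sigma\times\Sigma^\circ$ with $c=c_{\mathrm{EHZ}}$, and inserting $c(B)=\pi$, ${\rm Vol}(B^{2n}(1))=\pi^n/n!$ and ${\rm Vol}(\Sigma\times\Sigma^\circ)=\nu(X)$, the inequality rearranges into $\nu(X)\ge 4^n/n!$, which is exactly Conjecture~\ref{Mahler-conj}.
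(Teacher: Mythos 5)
Your proposal is correct and follows essentially the same route as the paper: take $c=c_{_{\rm EHZ}}$, translate closed characteristics on $\partial(\Sigma\times\Sigma^{\circ})$ into Minkowski ($\Sigma^{\circ}$-)billiard trajectories in $\Sigma$ (Theorem~\ref{Main-Theorem-From-AAO1}), and obtain the value $4$ from the symmetric two-bounce orbit through $\pm q_0$, which is precisely the case ${\mathcal K}=\Sigma$, ${\mathcal T}=\Sigma^{\circ}$ of Theorem~\ref{Main-Theorem-From-AKO} since ${\rm inrad}_{\Sigma}(\Sigma)=1$. The one step you leave as a plan --- that no closed trajectory can be shorter than the two-bounce one --- is exactly what the paper identifies as the main difficulty and defers to~\cite{AKO}, so your sketch sits at the same level of completeness as the paper's own exposition.
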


With Theorem~\ref{THM:CAP=4} at our disposal, it is not difficult to derive Theorem~\ref{THM-V-M}.
\begin{proof}[{\bf Proof of Theorem~\ref{THM-V-M}}]
Assume that Viterbo's volume-capacity conjecture holds. From Theorem~\ref{THM:CAP=4} it follows that there exists a symplectic capacity $c$ such that for every centrally symmetric convex body $\Sigma \subset {\mathbb R}^n_q$ one has
$$ {\frac {4^n} {\pi^n}} = {\frac { c^n(\Sigma \times \Sigma^{\circ})} {\pi^n}}
 \leq   {\frac {{\rm Vol}(\Sigma \times \Sigma^{\circ}) } {{\rm Vol}(B^{2n})} }  =  {\frac {n! \, {\rm Vol}(\Sigma \times \Sigma^{\circ})} {{\pi^n}}
},
$$
which  is exactly the bound for  ${\rm Vol}(\Sigma \times
\Sigma^{\circ})$ required by Mahler's conjecture.
\end{proof}

In the rest of this section we sketch the proof of Theorem~\ref{THM:CAP=4} (see~\cite{AKO} for a detailed exposition). %
We remark that an alternative proof, based on an approach to billiard dynamics developed in~\cite{BB}, 
was recently given in~\cite{ABKS}. We start with recalling the definition of 
the Ekeland-Hofer-Zehnder capacity, which is the symplectic capacity that appears in Theorem~\ref{THM:CAP=4}.

%
%

\smallskip 

The restriction of the standard symplectic form
$\omega=dq \wedge dp$ to a smooth closed connected 
hypersurface ${\mathcal S}
\subset {\mathbb R}^{2n}$ defines a 1-dimensional subbundle
${\rm ker}(\omega | {\mathcal S})$, whose integral curves comprise the
characteristic foliation of ${\mathcal S}$. In other words, a {\it closed
characteristic}  of  ${\mathcal S}$ is an embedded circle
in  ${\mathcal S}$ tangent to the canonical 
line bundle
\begin{equation*} {\mathfrak S}_{{\mathcal S}} = \{(x,\xi) \in T
{\mathcal S} \ | \ \omega(\xi,\eta) = 0 \ {\rm for \ all} \ \eta \in T_x
{\mathcal S} \}. \end{equation*}
Recall that the symplectic action 
of a closed curve $\gamma$ 
is defined by $A(\gamma) = \int_{\gamma} \lambda$,
where  $\lambda =pdq$ is the Liouville 1-form. 
The action spectrum of ${\mathcal S}$ is 
\begin{equation*}  {\cal L}({\mathcal S}) = \left \{ \, | \, {A}({\gamma}) \,  | \, ;
\, \gamma \ {\rm closed \ characteristic \ on} \  {\mathcal S}
\right \}.\end{equation*}

The following theorem, which is a combination of results from~\cite{EH} and~\cite{HZ}, states that on the class of convex domains in ${\mathbb R}^{2n}$,  the  Ekeland-Hofer capacity $c_{_{\rm EH}}$ and Hofer-Zehnder capacity $c_{_{\rm HZ}}$ coincide, 
and are given by the minimal action over all closed characteristics on the boundary of the corresponding convex body. 
%
%
\begin{theorem} \label{Cap_on_covex_sets} Let $ K \subseteq {\mathbb
R}^{2n}$ be a convex bounded domain with smooth boundary. 
Then there exists at least one closed characteristic $\widetilde \gamma
\subset \partial K$ satisfying
\begin{equation*} 
 c_{_{\rm EH}}(K) = c_{_{\rm HZ}}(K)= { A}(\widetilde \gamma) =  \min {\cal
L}( \partial K). \end{equation*}
\end{theorem}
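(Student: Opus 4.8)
The plan is to realize both capacities as the minimal symplectic action of a closed characteristic on $\partial K$, and to produce the minimizing characteristic through a convex (dual) variational problem. First I would encode the geometry dynamically: since $K$ is convex with smooth boundary, $\partial K$ is the level set $\partial K = \{H = \frac{1}{2}\}$ of the convex Hamiltonian $H(x) = \frac{1}{2}\mu_K(x)^2$, where $\mu_K$ is the Minkowski gauge of $K$ (smooth and positively $2$-homogeneous away from the origin). The closed characteristics of $\partial K$ are exactly the reparametrized periodic orbits of $\dot x = X_H(x) = J\nabla H(x)$ lying on this level set, and by Euler's identity for the $2$-homogeneous $H$ the symplectic action $A(\gamma) = \int_\gamma p\,dq$ of a periodic orbit is proportional to its period. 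Thus $\min \mathcal{L}(\partial K)$ is realized by a closed orbit of minimal period on the energy surface.

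Second, I would produce the minimizer by Clarke's dual action principle. Passing to the Legendre transform $G = H^*$, which is again convex and quadratically controlled, I would study the dual functional
$$\Phi(u) = \int_0^1 G(-J\dot u)\, dt$$
on the space of loops $u \in H^{1/2}(S^1;\mathbb{R}^{2n})$ of mean zero, restricted to the normalization $\int_0^1 \frac{1}{2}\langle -J\dot u, u\rangle\, dt = 1$. Convexity of $G$ makes $\Phi$ convex and weakly lower semicontinuous, while the constraint is a weakly closed quadratic hypersurface, so a minimizer $u_0$ exists. The Euler--Lagrange equation then identifies $x_0 = \nabla G(-J\dot u_0)$, after the scaling dictated by the Lagrange multiplier, with a genuine periodic orbit on a rescaled copy of $\partial K$; rescaling back yields the sought closed characteristic $\widetilde\gamma$, and the minimal value of $\Phi$ translates into the equality $A(\widetilde\gamma) = \min \mathcal{L}(\partial K)$.

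Third, I would match each capacity to this minimal action. For $c_{_{\rm HZ}}$ I would prove two inequalities: the bound $c_{_{\rm HZ}}(K) \geq \min \mathcal{L}(\partial K)$ by exhibiting admissible Hamiltonians supported in $K$ whose oscillation approaches $\min \mathcal{L}(\partial K)$ yet, by convexity, carry no nonconstant orbit of period at most one; and the reverse bound by showing that an admissible Hamiltonian of larger oscillation would, through the minimal orbit produced in the second step, be forced to possess a fast periodic orbit, contradicting admissibility. For $c_{_{\rm EH}}$ I would invoke its definition as an $S^1$-equivariant minimax of the indefinite action functional on the loop space and argue that, for convex $K$, this minimax collapses precisely onto the minimal closed characteristic, giving $c_{_{\rm EH}}(K) = \min \mathcal{L}(\partial K)$ as well.

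The main obstacle is this last step: showing that the two a priori unrelated variational definitions both select the \emph{minimal} action value. For $c_{_{\rm EH}}$ this is the technical core of the Ekeland--Hofer theory, requiring a delicate index and linking analysis of the critical points of the action functional under the circle action, so as to certify that the minimax value is not carried by a higher iterate or a longer orbit. For $c_{_{\rm HZ}}$ the subtle direction is the upper bound, where convexity of $K$ must be used essentially to rule out admissible Hamiltonians exceeding the minimal action. This convexity is indispensable: for general hypersurfaces the action spectrum is far less rigid, and even the existence of a single closed characteristic is the content of the Weinstein conjecture.
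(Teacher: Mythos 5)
The paper itself offers no proof of this theorem --- it states it as a combination of results from~\cite{EH} and~\cite{HZ} --- and your outline is essentially the standard argument from those references: Clarke's dual action principle applied to $G=H^{*}$ (with $H$ the squared gauge of $K$) to produce the action-minimizing closed characteristic, the two-sided comparison for $c_{_{\rm HZ}}$ in which convexity enters only in the upper bound, and the collapse of the $S^1$-equivariant minimax onto the minimal action for $c_{_{\rm EH}}$. The one technical slip is the choice of loop space: the dual functional $\Phi(u)=\int_0^1 G(-J\dot u)\,dt$ requires $\dot u\in L^2$, so it must be set up on mean-zero $H^1$ loops (equivalently on mean-zero $L^2$ in the variable $v=\dot u$), not on $H^{1/2}$, which is the space used for the direct (indefinite) action functional in the Ekeland--Hofer construction; with that correction your plan matches the known proofs.
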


We remark that although the above definition of closed characteristics, as well as Theorem~\ref{Cap_on_covex_sets}, 
were given only for the class of convex bodies with smooth boundary, they can naturally be generalized 
to the class of convex sets in ${\mathbb R}^{2n}$ with non-empty interior  (see~\cite{AAO1}). In what follows, we
refer to the coinciding Ekeland-Hofer and Hofer-Zehnder capacities on this class as the Ekeland-Hofer-Zehnder capacity. 

\smallskip

We turn now to show that for every centrally symmetric convex body $\Sigma \subset {\mathbb R}^n_q$, the Ekeland--Hofer--Zehnder capacity satisfies $c_{_{\rm EHZ}} (\Sigma \times \Sigma^{\circ})=4$.
For this purpose, we now switch gears and turn to mathematical billiards in Minkowski geometry. 

\smallskip

It is  folklore to people in the field that billiard flow can be treated, roughly speaking, as the limiting case of geodesic flow on a boundaryless manifold. 
Indeed, 
%
let $\Omega$ be a smooth plane billiard table, and consider its ``thickening", i.e. an infinitely thin three
dimensional body whose boundary $\Gamma$ is obtained by pasting two copies of $\Omega$ along their boundaries
and smoothing the edge. Thus, a billiard trajectory in $\Omega$ can be viewed as a geodesic line on
the boundary of $\Gamma$, that goes from one copy of $\Omega$ to another each time the billiard ball bounces off the boundary. 
The main technical difficulties  with this strategy is the rigorous treatment of the limiting process, and the analysis involved with the dynamics near the  boundary. 
One approach to  billiard dynamics and the existence question of periodic trajectories is an  approximation scheme which uses a certain ``penalization method"  developed by Benci and  Giannoni in~\cite{BG} (cf.~\cite{AM,IrieKei2}).
In what follows we present an alternative approach, and use characteristic foliation on singular  convex hypersurfaces in ${\mathbb R}^{2n}$ (see e.g.,~\cite{Cl,Eke,Kun}) to describe Finsler type billiards for convex domains in the configuration space ${\mathbb R}^n_q$.
The main advantage of this approach is that it allows one to use the natural one-to-one correspondence between the geodesic flow on a manifold  and the characteristic foliation on its unit cotangent bundle, and thus provides a natural  ``symplectic setup"  in which one can use tools 
such as Theorem~\ref{Cap_on_covex_sets} above in the context of billiard dynamics.  In particular, we show that the Ekeland-Hofer-Zehnder capacity of certain Lagrangian product configurations ${\mathcal K} \times {\mathcal T}$ in the classical phase space ${\mathbb R}^{2n}$  is the length of the shortest periodic  {\it  ${\mathcal T}$-billiard trajectory in ${\mathcal K}$} (see e.g.,~\cite{AAO1,V}), which we turn now to describe.

\smallskip

The general study of billiard dynamics in Finsler  and Minkowski
geometries was initiated by Gutkin and Tabachnikov in~\cite{GT}. From the point of view of geometric optics, Minkowski billiard
trajectories describe the propagation of light in a homogeneous
anisotropic medium that contains perfectly reflecting mirrors. 
Below, we focus on the special case of Minkowski billiards in a smooth convex
body  ${\mathcal K} \subset {\mathbb R}^n_q$. 
We equip ${\mathcal K}$
with a metric given by a certain norm $\| \cdot \|$, and
consider billiards in ${\mathcal K}$ with respect to the geometry induced by $\| \cdot \|$.
More precisely, let ${\mathcal K} \subset {\mathbb R}^n_q$, and ${\mathcal T}  \subset {\mathbb R}^n_p$ be two convex bodies with smooth boundary, and consider the 
unit cotangent bundle
\begin{equation*}
U_{\mathcal  T}^*{\mathcal K} := {\mathcal K}  \times {\mathcal T} = \{ (q,p) \, | \, q \in {\mathcal K}, \ {\rm and} \
g_{\mathcal T} (p)  \leq 1 \} \subset  T^* {\mathbb R}^n_q  = {\mathbb R}^{n}_q
\times  {\mathbb R}^{n}_p. \end{equation*} Here $g_{\mathcal T}$ is the gauge function 
$g_{\mathcal T}(x) = \inf \{r  \, | \, x \in r {\mathcal T} \}$. 
When ${\mathcal T}=-{\mathcal T}$ is centrally symmetric  
  one has $g_{\mathcal T}(x) = \|x\|_{\mathcal T}$.
For $p \in \partial {\mathcal T}$, the gradient vector $\nabla g_{\mathcal T}(p)$ is the outer normal to $\partial {\mathcal T}$ at the point $p$,  and is naturally considered to be in $\mathbb R^n_q = (\mathbb R^n_p)^* $.

\smallskip

Motivated by the classical correspondence between 
geodesics in a Riemannian manifold  and 
characteristics of its unit cotangent bundle, 
we define $({\mathcal K},{\mathcal T})$-billiard trajectories  to be characteristics in 
$U_{\mathcal  T}^*{\mathcal K}$ such that  their projections to ${\mathbb R}^n_q$ are  
closed billiard trajectories in ${\mathcal K}$ with a bouncing rule
that is determined by the geometry induced from the body ${\mathcal T}$; and vice versa, the projections to ${\mathbb R}^n_p$ are  
closed billiard trajectories in ${\mathcal T}$ with a bouncing rule
that is determined by 
${\mathcal K}$.
More precisely, 
when we follow the  vector fields of the dynamics, 
we move in ${\mathcal K} \times \partial {\mathcal T}$ from $(q_0,p_0)$ to
$(q_1,p_0) \in \partial {\mathcal K} \times \partial {\mathcal T}$ following the inner normal to
$\partial {\mathcal T}$ at $p_0$. When we hit the boundary $\partial {\mathcal K}$ at the
point $q_1$, the vector field  changes, and we start to move in
$\partial {\mathcal K}  \times {\mathcal T}$ from $(q_1,p_0)$ to $(q_1,p_1) \in \partial {\mathcal K} \times \partial {\mathcal T}$ following the outer
 normal to $\partial {\mathcal K}$ at the point $q_1$.  Next, we move from
$(q_1,p_1)$ to $(q_2,p_1)$ following the opposite of the normal to
$\partial {\mathcal T}$ at $p_1$, and so on and so forth (see
Figure $1$). 
It is not hard to check that when one of the bodies, say ${\mathcal T}$, is a Euclidean
ball, then when considering the projection to ${\mathbb R}^{n}_q$, the bouncing rule described above is the
classical one (i.e., equal impact and
reflection angles). 
Hence, the above 
reflection law is a
natural variation of the classical one 
when the Euclidean structure on ${\mathbb R}^n_q$
is replaced by the metric induced by the norm $\| \cdot \|_{{\mathcal T}}$.
We continue with a more precise definition. 
%

\begin{definition} \label{def-of-periodic-traj} Given two smooth convex bodies ${\mathcal K} \subset {\mathbb R}^n_q$ and ${\mathcal T} \subset {\mathbb R}^n_p$. 
A closed $({\mathcal K},{\mathcal T})$-billiard trajectory is the image of a piecewise smooth
map $\gamma \colon S^1 \rightarrow \partial ({\mathcal K} \times {\mathcal T}) $
such that for every  $t \notin {\mathcal B}_{\gamma}:= \{ t
\in S^1 \, | \, \gamma(t) \in \partial {\mathcal K}  \times \partial {\mathcal T} \}$ one has
\begin{equation*}
\dot \gamma(t) = d \, {\mathfrak X}(\gamma(t)),  \end{equation*} for some positive
 constant $d$ and the vector field ${\mathfrak X}$  given by
\begin{equation*}
{\mathfrak X}(q,p) = \left\{
\begin{array}{ll}
(-\nabla g_{\mathcal T}(p) ,0), &   (q,p) \in int({\mathcal K}) \times \partial {\mathcal T},\\
(0,\nabla g_{\mathcal K}(q)), & (q,p) \in \partial {\mathcal K} \times int({\mathcal T}).
\end{array} \right.
\end{equation*}
Moreover, for any $t \in {\mathcal B}_{\gamma}$, the left and right
derivatives of $\gamma(t)$ exist, and
\begin{equation*} \label{eq-the-cone}
\dot \gamma^{\pm}(t) \in \{   \alpha (-\nabla g_{\mathcal T}(p) ,0) + \beta
(0,\nabla g_{\mathcal K}(q))    \ | \ \alpha,\beta \geq 0,  \ (\alpha, \beta) \neq (0,0) \}.
\end{equation*}
\end{definition}
Although  in Definition~\ref{def-of-periodic-traj} 
there is a natural symmetry between the  bodies ${\mathcal K}$ and ${\mathcal T}$, 
in what follows  we shall  assume that  ${\mathcal K}$ 
plays the role of the billiard table, while  ${\mathcal T}$ induces the geometry that governs the billiard dynamics in ${\mathcal K}$.
We will use the following terminology: 
for a $({\mathcal K},{\mathcal T})$-billiard trajectory $\gamma$, the curve $\pi_q(\gamma)$,  where $\pi_q \colon {\mathbb R}^{2n} \rightarrow {\mathbb R}^n_q$ is the projection of $\gamma$ to the configuration space, 
shall be called a {\it  ${\mathcal T}$-billiard trajectory in ${\mathcal K}$}. Moreover,  similarly to the Euclidean case, one can check that ${\mathcal T}$-billiard trajectories in ${\mathcal K}$   
correspond to critical points of a length functional defined on the $j$-fold cross product of the boundary $\partial {\mathcal K}$, where the distances between two consecutive  points are measured with respect to  
 the support function $h_{\mathcal T}$, where $h_{\mathcal T}(u) = \sup \{ \langle x,u \rangle \, ; \, x \in {\mathcal T } \}$.  

\begin{definition} 
A closed $({\mathcal K},{\mathcal T})$-billiard trajectory $\gamma$ is said to be {\it proper}
if the set ${\mathcal B}_{\gamma}$ is finite, i.e., $\gamma$ is a  
broken bicharacteristic that enters and instantly exits the boundary
$\partial {\mathcal K} \times \partial {\mathcal T}$ at the reflection points.
In the case where ${\mathcal B}_{\gamma} = S^1$, i.e., $\gamma$ is travelling
solely along the boundary $\partial {\mathcal K}  \times \partial {\mathcal T}$,
 we say that $\gamma$ is a {\it gliding trajectory}.
\end{definition}

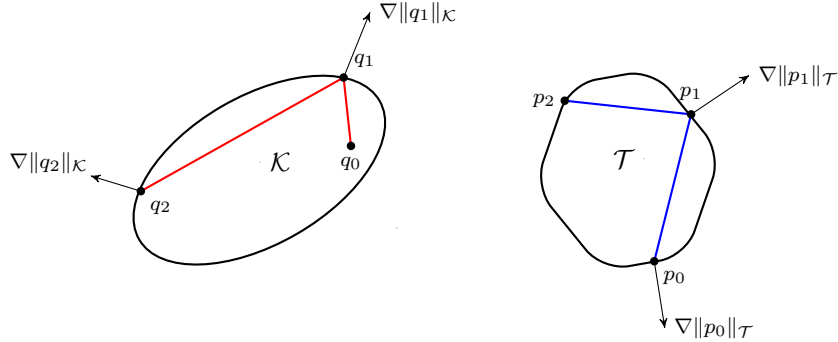
\begin{figure} 
\begin{center}
\begin{tikzpicture}[scale=0.7]

 \draw[important line][rotate=30] (0,0) ellipse (75pt and 40pt);

 \path coordinate (w1) at (2.1,4*0.75) coordinate (q0) at
 (-4.5*0.5,-2*0.2) coordinate (q1) at (1.6,4*0.44) coordinate (q2) at
 (1.74,+0.46) coordinate (w2) at (2.6,-1.1) coordinate (w3) at (-3.2,-0.11) coordinate (K) at
 (0,0.15);

\draw[red] [important line] (q0) -- (q1); \draw[->] (q1) -- (w1);
\draw[red] [important line] (q1) -- (q2); 
\draw[->] (q0) -- (w3);

\filldraw [black]
  (w3) circle (0pt) node[above left=-2.5pt] {{\footnotesize $\nabla \|q_2\|_{\mathcal K}$}}
    (w1) circle (0pt) node[right] {{\footnotesize $\nabla \|q_1\|_{\mathcal K}$}}
    (w2) circle (0pt) 
     (q0) circle (2pt) node[below right] {{\footnotesize $q_2$}}
      (q1) circle (2pt) node[above right=0.5pt] {{\footnotesize $q_1$}}
       (q2) circle (2pt) node[below=0.5pt] {{\footnotesize $q_0$}}
        (K) circle (0pt) node[right=0.5pt] {${\mathcal K}$};

       \begin{scope}[xshift=7cm]

 \draw[important line][rounded corners=10pt][rotate=10] (1.8,0) --
 (0.8,1.8)-- (-0.8,1.8)--  (-1.8,0)--  (-0.8,-1.8) -- (0.8,-1.8) --
 cycle;

 \path coordinate (p1) at (0.5,-4*0.433) coordinate (np0) at
 (2.3,2*0.9) coordinate (p0) at (1.2,2*0.53) coordinate (np1) at
 (0.7,-3) coordinate (p2) at (-1.2,2*0.66) coordinate (D) at
 (0.3,0.22);

 \draw[<-] (np0) node[right] {{\footnotesize $\nabla \|p_1
 \|_{\mathcal T}$}} -- (p0);
 \draw[<-] (np1) node[right] {{\footnotesize $\nabla \|p_0
 \|_{\mathcal T}$}} -- (p1);

 \draw[blue][important line] (p0) -- (p1);
 \draw[blue][important  line] (p0) -- (p2);

  \filldraw [black]
       (p1) circle (2pt) node[below right] {{\footnotesize $p_0$}}
         (p2) circle (2pt) node[left] {{\footnotesize $p_2$}}
         (p0) circle (2pt) node[above=2pt] {{\footnotesize $p_1$}}
          (D) circle (0pt) node[left] {${\mathcal T}$};
 \end{scope}
 \end{tikzpicture}

 \caption{A proper $({\mathcal K},{\mathcal T})$-Billiard trajectory.} 
 \end{center}
 \end{figure}

The following theorem was proved in~\cite{AAO1}.
\begin{theorem} \label{Main-Theorem-From-AAO1}
Let ${\mathcal K}  \subset {\mathbb R}_q^n$, ${\mathcal T}  \subset {\mathbb R}_p^n$ be
two smooth  convex bodies.  Then, every $({\mathcal K},{\mathcal T})$-billiard trajectory is either a proper trajectory, or a gliding one.
Moreover, the Ekeland-Hofer-Zehnder capacity $c_{_{\rm EHZ}}({\mathcal K} \times {\mathcal T})$, of the Lagrangian product ${\mathcal K} \times {\mathcal T}$, is the length of the shortest periodic ${\mathcal T}$-billiard trajectory in ${\mathcal K}$, measured with respect to the support function $h_{\mathcal T}$. 
\end{theorem}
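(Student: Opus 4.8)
The plan is to identify $c_{_{\rm EHZ}}(\mathcal{K}\times\mathcal{T})$ with the minimal action over closed characteristics on $\partial(\mathcal{K}\times\mathcal{T})$, and then to evaluate that action as an $h_{\mathcal{T}}$-length. The Lagrangian product $\mathcal{K}\times\mathcal{T}$ is a convex body whose boundary is smooth along the two faces $\mathrm{int}(\mathcal{K})\times\partial\mathcal{T}$ and $\partial\mathcal{K}\times\mathrm{int}(\mathcal{T})$, and singular only along the corner $\partial\mathcal{K}\times\partial\mathcal{T}$. On the first face the hypersurface is a level set of $(q,p)\mapsto g_{\mathcal{T}}(p)$, so its characteristic line is spanned by the Hamiltonian direction $(-\nabla g_{\mathcal{T}}(p),0)$; on the second face it is a level set of $g_{\mathcal{K}}(q)$, with characteristic direction $(0,\nabla g_{\mathcal{K}}(q))$. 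These are exactly the two branches of the field $\mathfrak{X}$ of Definition~\ref{def-of-periodic-traj}, and at a corner point the generalized characteristic direction is forced into the normal cone, i.e. the cone spanned by these two vectors, which is precisely the cone condition imposed there. Thus the closed generalized characteristics of $\partial(\mathcal{K}\times\mathcal{T})$ coincide, by construction, with the closed $(\mathcal{K},\mathcal{T})$-billiard trajectories. Invoking the extension of Theorem~\ref{Cap_on_covex_sets} to convex bodies with non-smooth boundary (cf.~\cite{AAO1}), one obtains $c_{_{\rm EHZ}}(\mathcal{K}\times\mathcal{T}) = \min\mathcal{L}(\partial(\mathcal{K}\times\mathcal{T}))$, the minimal action over all such trajectories.

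To prove the first (dichotomy) assertion I would work locally. Off the corner the motion follows a single branch of $\mathfrak{X}$: either $p$ is frozen on $\partial\mathcal{T}$ while $q$ runs along a straight chord of $\mathcal{K}$ in the fixed direction $-\nabla g_{\mathcal{T}}(p)$, or $q$ is frozen on $\partial\mathcal{K}$ while $p$ runs along a chord of $\mathcal{T}$ in the direction $\nabla g_{\mathcal{K}}(q)$; a switch between branches can occur only at the corner. Staying on the corner requires both constraints $g_{\mathcal{K}}(q)=1$ and $g_{\mathcal{T}}(p)=1$ to be preserved simultaneously, i.e. $\langle\nabla g_{\mathcal{K}}(q),\dot q\rangle=0$ and $\langle\nabla g_{\mathcal{T}}(p),\dot p\rangle=0$ with $\dot\gamma$ in the open cone. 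I would show that wherever this tangency fails the trajectory must leave the corner transversally into one of the faces, so that the contact set $\mathcal{B}_\gamma$ cannot accumulate. Analyzing the two regimes then yields the dichotomy: $\mathcal{B}_\gamma$ is either finite (a \emph{proper} trajectory) or all of $S^1$ (a \emph{gliding} one).

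For the second assertion I would evaluate the action $A(\gamma)=\int_\gamma\lambda$, $\lambda=p\,dq$, along a proper trajectory with reflection points $q_0,\dots,q_m=q_0$ on $\partial\mathcal{K}$ and momenta $p_0,\dots,p_{m-1}\in\partial\mathcal{T}$. On each $p$-segment $q$ is constant, so $dq=0$ and the contribution vanishes; on the $j$-th $q$-segment $p\equiv p_j$, so the contribution is $\langle p_j,\,q_{j+1}-q_j\rangle$. Here $q_{j+1}-q_j$ is a positive multiple $c_j$ of the inner normal $-\nabla g_{\mathcal{T}}(p_j)$, and since $\nabla g_{\mathcal{T}}(p_j)$ is the outer normal at $p_j$, the point $p_j$ is the support point of $\mathcal{T}$ in that direction; with Euler's relation this gives $h_{\mathcal{T}}(\nabla g_{\mathcal{T}}(p_j))=\langle p_j,\nabla g_{\mathcal{T}}(p_j)\rangle=g_{\mathcal{T}}(p_j)=1$. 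Hence the $j$-th contribution has magnitude $c_j$, and by the same identity the $h_{\mathcal{T}}$-length of the $j$-th segment of $\pi_q(\gamma)$ is likewise $c_j=h_{\mathcal{T}}(c_j\nabla g_{\mathcal{T}}(p_j))$. Summing, $|A(\gamma)|$ equals the $h_{\mathcal{T}}$-length of the closed billiard path $\pi_q(\gamma)$; the same one-form computation, using $\langle\nabla g_{\mathcal{T}}(p),p\rangle=1$ along the corner, handles gliding trajectories. Minimizing over all closed characteristics and combining with the first paragraph gives $c_{_{\rm EHZ}}(\mathcal{K}\times\mathcal{T})$ equal to the length of the shortest periodic $\mathcal{T}$-billiard trajectory in $\mathcal{K}$.

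The main obstacle, and the technical heart of the argument, is the non-smoothness of $\partial(\mathcal{K}\times\mathcal{T})$ along the corner $\partial\mathcal{K}\times\partial\mathcal{T}$, which is exactly where the billiard reflections take place. Two points require genuine care: first, defining a generalized characteristic on this singular convex hypersurface and extending Theorem~\ref{Cap_on_covex_sets} to it, either through a smoothing approximation with a compactness argument controlling the limiting characteristics and their actions, or directly via the nonsmooth (Clarke-type) formalism of~\cite{Cl,Eke,Kun}; and second, the local corner analysis underlying the proper-versus-gliding dichotomy, where one must exclude pathological contact patterns of $\gamma$ with the corner. By comparison, the Hamiltonian computation on the smooth faces and the support-function identity are routine.
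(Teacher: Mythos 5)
Your proposal follows essentially the same route as the paper: the paper defers the detailed proof to~\cite{AAO1}, but the strategy it outlines there -- realizing $({\mathcal K},{\mathcal T})$-billiard trajectories as generalized (Clarke-type) characteristics on the singular hypersurface $\partial({\mathcal K}\times{\mathcal T})$, invoking the nonsmooth extension of Theorem~\ref{Cap_on_covex_sets} so that $c_{_{\rm EHZ}}$ equals the minimal action of a closed characteristic, and evaluating $\int_\gamma p\,dq$ segment by segment via Euler's relation to recover the $h_{\mathcal T}$-length -- is exactly the plan you describe. You also correctly identify where the genuine technical work lies (the corner analysis behind the proper-versus-gliding dichotomy and the extension of the capacity formula to the non-smooth boundary), which is precisely the content supplied by~\cite{AAO1}.
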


This theorem provides an effective way to estimate (and sometimes compute) the Ekeland-Hofer-Zehnder capacity of Lagrangian product configurations in the phase space. For example, in~\cite{AKO} (see Remark 4.2 therein) we used elementary tools from convex geometry to show that for centrally symmetric convex bodies, 
the shortest ${\mathcal T}$-billiard trajectory in ${\mathcal K}$ is a 2-periodic trajectory connecting a tangency point $q_0$ of ${\mathcal K}$ and a homotetic copy of $ {\mathcal T}^{\circ}$  to $-q_0$  (see Figure 2).  This result extends a previous result by Ghomi~\cite{Gh} for Euclidean billiards. 
In both cases, the main difficulty in the proof is to show 
that the above mentioned 2-periodic trajectory is indeed the shortest one. 
With this geometric observation at our disposal, we proved in~\cite{AKO}  the following result: denote by 
${\rm inrad}_{{\mathcal T}}({\mathcal K})  =  \max \{r \, | \, r{\mathcal T} \subset {\mathcal K} \}$. 

\begin{theorem} \label{Main-Theorem-From-AKO}
If ${\mathcal K}  \subset {\mathbb R}^n_q$, ${\mathcal T} \subset {\mathbb R}^n_p$ are centrally symmetric convex bodies, then
$$c_{_{\rm EHZ}}({\mathcal K}  \times {\mathcal T}) =  \overline c({\mathcal K}  \times {\mathcal T}) =  4 \, {\rm inrad}_{{\mathcal T}^{\circ}}({\mathcal K})$$
\end{theorem}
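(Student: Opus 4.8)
The plan is to establish the two equalities by a sandwich argument: compute $c_{_{\rm EHZ}}({\mathcal K}\times{\mathcal T})$ from the billiard description, bound the cylindrical capacity $\overline c$ from above by an explicit linear symplectic squeezing, and then use that $\overline c$ is the largest symplectic capacity while $c_{_{\rm EHZ}}$ is one, to pinch everything together. Throughout I would write $r = {\rm inrad}_{{\mathcal T}^{\circ}}({\mathcal K})$, so that $r{\mathcal T}^{\circ}$ is the largest homothet of ${\mathcal T}^{\circ}$ contained in ${\mathcal K}$, and fix a contact point $q_0 \in \partial {\mathcal K} \cap \partial(r{\mathcal T}^{\circ})$ together with a common outer normal $u \in {\mathbb R}^n_p$ of the two bodies at $q_0$.

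First I would compute $c_{_{\rm EHZ}}({\mathcal K}\times{\mathcal T})$. By Theorem~\ref{Main-Theorem-From-AAO1} this capacity equals the shortest $h_{\mathcal T}$-length of a closed ${\mathcal T}$-billiard trajectory in ${\mathcal K}$, and by the geometric observation quoted above (the shortest trajectory for centrally symmetric bodies is the two-bounce orbit through the tangency point of the largest inscribed homothet of ${\mathcal T}^{\circ}$) this minimizer is the $2$-periodic orbit bouncing between $q_0$ and $-q_0$. Its length is $h_{\mathcal T}(-2q_0) + h_{\mathcal T}(2q_0) = 4\,h_{\mathcal T}(q_0)$, by positive homogeneity and, since ${\mathcal T}=-{\mathcal T}$, the evenness of $h_{\mathcal T}$. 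As the support function of ${\mathcal T}$ is the gauge of the polar, $h_{\mathcal T}(q_0) = \|q_0\|_{{\mathcal T}^{\circ}}$, and because $q_0 \in \partial(r{\mathcal T}^{\circ})$ this gauge equals $r$. Hence $c_{_{\rm EHZ}}({\mathcal K}\times{\mathcal T}) = 4r$.

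The substantive remaining step is the upper bound $\overline c({\mathcal K}\times{\mathcal T}) \le 4r$, for which I would use $q_0$ and $u$ to manufacture a conjugate pair of linear functionals. Central symmetry and the supporting-hyperplane property give ${\mathcal K}\subset \{ q : |\langle u,q\rangle| \le w \}$ with $w = h_{\mathcal K}(u) = \langle u,q_0\rangle > 0$, while the corresponding slab for ${\mathcal T}$ is ${\mathcal T}\subset \{ p : |\langle q_0,p\rangle| \le h_{\mathcal T}(q_0) = r \}$. Setting $a(q,p)=\langle u,q\rangle$ and $b(q,p)=\langle q_0,p\rangle$, a direct computation gives the Poisson bracket $\{ a,b \} = \langle u,q_0\rangle = w$, so $(a,\,b/w)$ is a Darboux pair that extends to linear symplectic coordinates on $({\mathbb R}^{2n},\omega)$. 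In these coordinates ${\mathcal K}\times{\mathcal T}$ projects into the rectangle $\{ |a|\le w \}\times\{ |b/w|\le r/w \}$, of area $2w\cdot(2r/w) = 4r$; squeezing this rectangle area-preservingly into a round disc of area $4r$ exhibits a symplectic embedding of ${\mathcal K}\times{\mathcal T}$ into a cylinder $Z^{2n}(R)$ with $\pi R^2 = 4r$, whence $\overline c({\mathcal K}\times{\mathcal T}) \le 4r$. Combining the two steps with the maximality of $\overline c$ among symplectic capacities yields $4r = c_{_{\rm EHZ}}({\mathcal K}\times{\mathcal T}) \le \overline c({\mathcal K}\times{\mathcal T}) \le 4r$, forcing equality throughout.

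I expect the genuine obstacle to sit inside Step~1, namely the minimality claim that no closed ${\mathcal T}$-billiard trajectory undercuts the two-bounce orbit --- precisely the point flagged as hardest in the discussion preceding the theorem --- while for non-smooth ${\mathcal K},{\mathcal T}$ one also needs an approximation argument to transfer Theorem~\ref{Main-Theorem-From-AAO1} from the smooth setting to arbitrary convex bodies, using the continuity of $c_{_{\rm EHZ}}$ on convex domains. The other place demanding care is the normalization in Step~2: that the two slab half-widths, reconciled through the Poisson bracket, combine to give area \emph{exactly} $4r$ rather than something larger, so that the upper bound matches $c_{_{\rm EHZ}}$ on the nose.
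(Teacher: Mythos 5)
Your proposal is correct and follows essentially the same route as the paper: the identity $c_{_{\rm EHZ}}({\mathcal K}\times{\mathcal T})=4\,{\rm inrad}_{{\mathcal T}^{\circ}}({\mathcal K})$ is obtained exactly as in the text from Theorem~\ref{Main-Theorem-From-AAO1} combined with the $2$-bounce minimality observation, and your upper bound $\overline c({\mathcal K}\times{\mathcal T})\leq 4\,{\rm inrad}_{{\mathcal T}^{\circ}}({\mathcal K})$, via a Darboux pair of supporting linear functionals and the rectangle-to-disc symplectomorphism, is precisely the projection trick the paper itself employs in Lemma~\ref{lem-complex-symetric} and which underlies the argument in~\cite{AKO}, to which the survey defers this half of the statement. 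The final sandwich using the maximality of $\overline c$ among symplectic capacities then closes the proof as intended.
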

Note that Theorem~\ref{Main-Theorem-From-AKO} immediately implies Theorem~\ref{THM:CAP=4}  above, which in turn implies Theorem~\ref{THM-V-M}.
Thus, we have shown that Mahler's conjecture follows 
from a special case of Viterbo's conjecture.  
In fact, it follows immediately from the proof of Theorem~\ref{THM-V-M}  that Mahler's conjecture is equivalent to Viterbo's conjecture when the latter is restricted to the Ekeland-Hofer-Zehnder capacity, and to convex domains of the form $\Sigma \times \Sigma^{\circ}$, where $\Sigma \subset {\mathbb R}^n_q$ is a centrally symmetric convex body.  We hope that further pursuing this line of research will lead to a  breakthrough in understanding both conjectures.

\begin{figure} 
\begin{center}
  \begin{tikzpicture}[scale=0.8] \label{Fig1}

\path coordinate (q1) at (-0.3,1.9) coordinate (q2) at (0.55,0.32) coordinate (q3) at (-0.3,1) coordinate (q4) at (0.3,-1) coordinate (q5) at (-0.80,1.9) coordinate (q6) at (0.8,-1.9)   ; 
\filldraw [black]
 (q3) circle (1pt) node[above] {{\footnotesize $\widetilde q$}}
 (q4) circle (1pt) node[below left=-0.7pt] {{\footnotesize $-\widetilde q$}}
 (q1) circle (0pt) node[below right=4pt] {{\footnotesize ${\mathcal K}$}}
(q2) circle (0pt) node[above right=-3.2pt] {{\footnotesize ${ r} \, {\mathcal T}^{\circ}$}};

\draw[dashed] (q3) -- (q4); 
 \draw[thick,->,blue] (q3)--(q5) node[left, black ] {${\scriptstyle  \nabla \| \widetilde q \|_{\mathcal K}}$};
  \draw[thick,->,blue] (q4)--(q6) node[right, black ] {${\scriptstyle \nabla \| - \widetilde q \|_{\mathcal K}}$};

  \draw[blue,rotate=30] (0,0) ellipse (2.2cm and 1cm);
  \draw[red,rotate=-80] (0,0) ellipse (1.05cm and 0.55cm);
      \begin{scope}[xshift=7cm]

\path coordinate (p1) at (-0.0,1.65) coordinate (p2) at (0.50,0.23) coordinate (p3) at (-0.75,1.32) coordinate (p4) at (0.75,-1.32) coordinate (p5) at (-0.7*1.4,1.2*1.8) coordinate (p6) at (0.7*1.4,-1.2*1.8)  ; 
\filldraw [black]
 (p2) circle (0pt) node[right=4pt] {{\footnotesize ${ r} \, {\mathcal K}^{\circ}$}}
(p1) circle (0pt) node[right] {{\footnotesize ${\mathcal T}$}};

\filldraw [black]
 (p3) circle (1pt) node[above right=-1pt] {{\footnotesize ${ } \widetilde p$}}
 (p4) circle (1pt) node[below left=-1pt] {{\footnotesize $-{ } \widetilde p$}};

\draw[dashed] (p3) -- (p4); 
 \draw[thick,->,red] (p3)--(p5) node[left, black ] {${\scriptstyle { \nabla \| \widetilde p \|_{\mathcal T}}}$};
  \draw[thick,->,red] (p4)--(p6) node[right, black ] {${\scriptstyle  \nabla \| - \widetilde p \|_{\mathcal T} }$};

  \draw[red,rotate=-20] (0,0) ellipse (1.8cm and 1.3cm);
  \draw[blue,rotate=-55] (0,0) ellipse (1.55cm and 0.7cm);
\end{scope}
\end{tikzpicture}

\caption{\it ${\mathcal T}$-billiard trajectory in ${\mathcal K}$ of length $4 \, {\rm inrad}_{{\mathcal T}^{\circ}}({\mathcal K})$.} 
\end{center}
\end{figure}
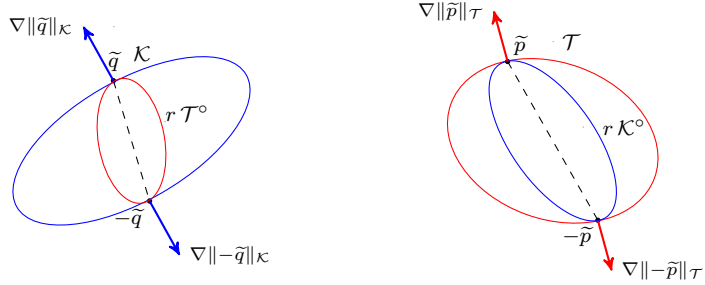

\subsection{Bounds on the length of the shortest billiard trajectory}

Going somehow in the opposite direction,  
one can also use  the theory of symplectic capacities to provide
several bounds and inequalities for the length of the shortest periodic
billiard trajectory in a smooth convex body in ${\mathbb R}^n$. 
In~\cite{AAO1} we prove the following theorem, which for the sake of simplicity we state only for the case of Euclidean billiards (for several other related results see~\cite{ABKS,AM,BB,Gh,IrirKei1, IrieKei2,V}). 
%
\begin{theorem} \label{THM-Billiard} Let  $K \subset {\mathbb R}^n$ be a smooth convex body, and let $\xi(K)$ denote the length of the shortest periodic billiard trajectory in $K$. Then, 
\begin{enumerate}
\item[(i)] $\xi(K_1) \leq \xi(K_2)$, for any convex domains $K_1 \subseteq K_2 \subseteq {\mathbb R}^n$ (monotonicity);
\item[(ii)] $\xi(K) \leq C \sqrt n \, {\rm Vol}(K)^{\frac 1 n},$ for some universal constant $C>0$;
\item[(iii)] $4  {\rm inrad}(K) \leq \xi(K) \leq 2 (n+1) {\rm inrad}(K)$;
\item[(iv)] $\xi(K_1 + K_2) \geq \xi(K_1) + \xi(K_2)$ (Brunn-Minkowski type inequality). 
\end{enumerate}
\end{theorem}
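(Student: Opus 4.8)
The plan is to reduce all four statements to the single identity $\xi(K) = c_{_{\rm EHZ}}(K \times B^n)$, where $B^n \subset {\mathbb R}^n_p$ is the Euclidean unit ball. This is precisely Theorem~\ref{Main-Theorem-From-AAO1} in the case ${\mathcal T}=B^n$: since $h_{B^n}(u)=|u|$, the $h_{\mathcal T}$-length of a $({\mathcal K},{\mathcal T})$-billiard trajectory is its ordinary Euclidean length, so the shortest periodic billiard trajectory in $K$ computes the Ekeland--Hofer--Zehnder capacity of the Lagrangian product $K\times B^n \subset {\mathbb R}^{2n}$. With this in hand, monotonicity (i) is immediate: if $K_1 \subseteq K_2$ then $K_1 \times B^n \subseteq K_2 \times B^n$, and property (P1) gives $\xi(K_1) = c_{_{\rm EHZ}}(K_1 \times B^n) \le c_{_{\rm EHZ}}(K_2 \times B^n) = \xi(K_2)$.

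For (ii), I would apply Theorem~\ref{up-to-uni-constnat} to the convex body $K\times B^n$. Since $c(B^{2n}(1))=\pi$, this yields $\xi(K) \le A\pi\,\big({\rm Vol}(K\times B^n)/{\rm Vol}(B^{2n}(1))\big)^{1/n}$. Writing ${\rm Vol}(K\times B^n) = {\rm Vol}(K)\,{\rm Vol}(B^n)$ and ${\rm Vol}(B^{2n}(1)) = \pi^n/n!$, the volume ratio factors as ${\rm Vol}(K)^{1/n}\,\big({\rm Vol}(B^n)\,n!/\pi^n\big)^{1/n}$, and a Stirling estimate gives $\big({\rm Vol}(B^n)\,n!/\pi^n\big)^{1/n} = \Theta(\sqrt n)$. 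Absorbing constants produces $\xi(K)\le C\sqrt n\,{\rm Vol}(K)^{1/n}$, which is exactly (ii).

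For the two-sided inradius bound (iii), the lower bound is again monotonicity: if $rB^n\subseteq K$ realizes $r={\rm inrad}(K)$, then (i) gives $\xi(rB^n)\le \xi(K)$, while Theorem~\ref{Main-Theorem-From-AKO} applied to the centrally symmetric pair $(rB^n,B^n)$ (with $(B^n)^{\circ}=B^n$) gives $\xi(rB^n)=c_{_{\rm EHZ}}(rB^n\times B^n)=4\,{\rm inrad}_{B^n}(rB^n)=4r$; the shortest trajectory here is just a diameter of the inscribed ball traversed back and forth. For the upper bound I would pass to a maximal inscribed ball and use its contact points $q_1,\dots,q_m$ (outer unit normals $\nu_i=q_i/r$): by a Steinitz/Carath\'eodory selection the center lies in the convex hull of at most $n+1$ of them, and I would build from these an explicit closed billiard trajectory of length at most $2(n+1)r$. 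This is the main obstacle of the whole theorem: the naive reflection data assembled from the Steinitz coefficients violates the constraint that the successive momenta be unit vectors forming a genuine closed loop on the sphere, so one must produce admissible unit momenta realizing a true closed characteristic while keeping the total length controlled by $2(n+1)\,{\rm inrad}(K)$.

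Finally, for the Brunn--Minkowski-type inequality (iv) I would exploit the additivity of support functions. Let $P^*$ be a minimizing billiard trajectory for $K_1+K_2$, with bounce points $q_i^*$, outer unit normals $\nu_i$, and segment directions $u_i\in\partial B^n$, so that $\xi(K_1+K_2)=\sum_i|q_{i+1}^*-q_i^*|=\sum_i\langle q_{i+1}^*-q_i^*,u_i\rangle$. Summation by parts together with the Euclidean reflection law $u_{i-1}-u_i=c_i\nu_i$, where $c_i=2\langle u_{i-1},\nu_i\rangle\ge 0$, rewrites this as $\sum_i c_i\langle q_i^*,\nu_i\rangle=\sum_i c_i\,h_{K_1+K_2}(\nu_i)$, since $q_i^*$ is the support point of $K_1+K_2$ in direction $\nu_i$. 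Because $h_{K_1+K_2}=h_{K_1}+h_{K_2}$, this splits as $\sum_i c_i h_{K_1}(\nu_i)+\sum_i c_i h_{K_2}(\nu_i)$, and I would bound each summand below by $\xi(K_j)$ by reading $\sum_i c_i h_{K_j}(\nu_i)$ off as the action of an admissible closed characteristic on $\partial(K_j\times B^n)$ built from the common data $(\nu_i,c_i,u_i)$; here the ${\mathcal K}\leftrightarrow{\mathcal T}$ symmetry $c_{_{\rm EHZ}}(K_j\times B^n)=c_{_{\rm EHZ}}(B^n\times K_j)$ lets one interpret the split data as a momentum-space trajectory governed by $K_j$. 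Adding the two bounds yields $\xi(K_1+K_2)\ge\xi(K_1)+\xi(K_2)$. The delicate point, parallel to (iii), is verifying that each split piece is genuinely admissible rather than merely formally additive — in particular reconciling the orientation and reflection conventions when $K_1,K_2$ are not centrally symmetric — but I still expect the explicit construction in the upper bound of (iii) to be the hardest single step overall.
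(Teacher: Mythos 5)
Your reduction of all four parts to the identity $\xi(K)=c_{_{\rm EHZ}}(K\times B^n)$ is exactly the route taken in the cited source~\cite{AAO1} (the survey itself only states the theorem and refers there for the proof), and your treatment of (i), (ii), and the lower bound in (iii) is correct and essentially identical to the original: monotonicity of the capacity for (i); Theorem~\ref{up-to-uni-constnat} applied to $K\times B^n$ plus the Stirling estimate for (ii); and $\xi(rB^n)=c_{_{\rm EHZ}}(rB^n\times B^n)=4r$ via Theorem~\ref{Main-Theorem-From-AKO} combined with monotonicity for the bound $4\,{\rm inrad}(K)\leq \xi(K)$.

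However, the two steps you flag as delicate are genuine gaps, and they are precisely where the substance of the theorem lies. For the upper bound in (iii), you correctly note that the polygon through the Steinitz contact points does not satisfy the reflection law, but you do not produce any admissible trajectory, so the bound $2(n+1)\,{\rm inrad}(K)$ remains unproven in your outline. For (iv), the summation by parts $\xi(K_1+K_2)=\sum_j c_j\,h_{K_1+K_2}(\nu_j)$ and the splitting via $h_{K_1+K_2}=h_{K_1}+h_{K_2}$ are correct, but the final step fails as stated: the natural candidates in $K_i$ (the support points of $K_i$ in the directions $\nu_j$, carrying the same reflection data) do not have edges parallel to the required directions $u_j$, so they are not closed characteristics on $\partial(K_i\times B^n)$; invoking the ${\mathcal K}\leftrightarrow{\mathcal T}$ swap does not repair this by itself, since the bouncing rule of a $K_i$-billiard in $B^n$ still depends on $K_i$. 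The missing idea that closes both gaps is a characterization of these capacities in which admissibility of a competitor makes no reference to the reflection law: $c_{_{\rm EHZ}}({\mathcal K}\times{\mathcal T})$ equals the least $h_{\mathcal T}$-length of a closed polygon that \emph{cannot be translated into the interior of} ${\mathcal K}$ (this relaxation is what the proof in~\cite{AAO1} effectively supplies through its treatment of generalized characteristics of non-smooth convex bodies, and it is made most explicit in the approach of~\cite{BB} and~\cite{ABKS}). Granting it, (iii) follows because the contact-point polygon has at most $n+1$ vertices, each of norm ${\rm inrad}(K)$, hence length at most $2(n+1)\,{\rm inrad}(K)$, and it cannot be translated into ${\rm int}(K)$ since $0\in{\rm conv}\{\nu_i\}$ forces, for every translation $v$, some vertex past its supporting hyperplane; and (iv) follows because, after swapping factors so that the table is $B^n$, the constraint ``not translatable into ${\rm int}(B^n)$'' is independent of $K_i$, so the optimal polygon for $K_1+K_2$ is an admissible competitor for both $K_1$ and $K_2$, and additivity of support functions finishes the argument. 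Without some such relaxation of admissibility, your outline cannot be completed.
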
 
We remark that the inequality $4 {\rm inrad}(K) \leq \xi(K)$ in ${\it (iii)}$ above was proved already in~\cite{Gh}, the monotonicity property was  
well known to experts in the field (although it has not been addressed in the literature to the best of our knowledge), and  all the results in Theorem~\ref{THM-Billiard} were later recovered and generalized by different methods (see~\cite{ABKS,IrirKei1, IrieKei2}).
Moreover, in light of the ``classical versus quantum" relation between the length spectrum in Riemannian geometry
and the Laplace spectrum, via trace formulae and Poisson relations, Theorem~\ref{THM-Billiard} 
can be viewed as
a classical counterpart of some  well-known results  for the first Laplace eigenvalue 
on convex domains. It is interesting to note that, to the best of the author's knowledge, the exact value of the constant $C$ in part  ${\it (ii)}$ of Theorem~\ref{THM-Billiard}   is unknown already in the two-dimensional case.

\section{The Uniqueness of Hofer's Metric} \label{SEC:HOFER}

One of the most striking facts regarding 
the group of Hamiltonian diffeomorphisms associated with  a symplectic manifold is that it can be
equipped with an intrinsic geometry given by a bi-invariant Finsler metric known as
Hofer's metric~\cite{H}. In contrast to the case of finite-dimensional Lie groups, the existence of such
a metric on an infinite-dimensional group of transformations is highly unusual due to
the lack of local  compactness.
Hofer's metric is exceptionally important for at least two reasons: first, Hofer showed in~\cite{H} that this metric 
gives rise to an important symplectic capacity known as  ``displacement energy", which turns out to have many different applications in symplectic topology and Hamiltonian dynamics (see e.g.,~\cite{Chek,H,HZ, L,LM,P,P1}). Second, it provides a certain geometric intuition for
the understanding of the long-time behaviour of Hamiltonian dynamical systems.

\smallskip

In~\cite{EliP}, Eliashberg and Polterovich initiated a discussion on the uniqueness of Hofer's metric
(cf.~\cite{Eli,P1}). They asked whether for a closed symplectic manifold $(M,\omega)$, Hofer's metric is the only 
 bi-invariant Finsler metric on the group  of Hamiltonian diffeomorphisms. 
In this section we explain  (following~\cite{OW} and~\cite{BO}) how tools from  classical functional analysis and the theory of normed function spaces  can be used to  positively  answer this question, and show that up to equivalence of metrics, Hofer's metric is unique. 
For this purpose, we now turn to more precise
formulations.



%

\smallskip

Let $(M,\omega)$ be a closed $2n$-dimensional symplectic manifold,
and denote by $C^{\infty}_0(M)$ the space of smooth functions that
are zero-mean normalized with respect to the canonical volume form
$\omega^n$. With every smooth time-dependent Hamiltonian function $H: M \times [0,1] \rightarrow {\mathbb
R}$, 
one associates a  vector field $X_{H_t}$ 
via the equation $i_{X_{H_t}} \omega = - dH_t$, where $ H_t(x) =
H(t,x)$. The flow
of  $X_{H_t}$ 
is denoted by $\phi_H^t$ and is defined for all $t \in [0,1]$.
The 
group of Hamiltonian
diffeomorphisms consists of all the time-one maps of such
Hamiltonian flows, i.e.,
$$ {\rm Ham}(M,\omega) = \{ \phi_H^1 \ |  \ \phi_H^t \
{\rm is \ a \ Hamiltonian \ flow  } \}.$$ When  equipped with the
standard $C^{\infty}$-topology, the group 
${\rm Ham}(M,\omega)$ 
is an infinite-dimensional Fr\'echet Lie group. Its Lie algebra, denoted here by 
${\cal A}$, can be naturally identified with the space of normalized smooth functions $C^{\infty}_0(M)$.
Moreover, the adjoint action of Ham$(M,\omega)$ on ${\cal A}$ is the
standard action of diffeomorphisms on functions, i.e., ${\rm Ad}_\phi f = f
\circ \phi^{-1}$, for every $f \in {\cal A}$  and $\phi \in$
Ham$(M,\omega)$. For more details on the group of  Hamiltonian
diffeomorphisms see e.g.,~\cite{HZ, McSal,P1}. 

\smallskip

Next, we define a Finsler pseudo-distance on
Ham$(M,\omega)$. Given any pseudo-norm $\| \cdot \|$ on 
${\cal A}$, we define the length of a path $\alpha : [0,1]
\rightarrow {\rm Ham}(M,\omega)$ as
$$ {\rm length}\{ {\alpha}\} = \int_0^1 \| \dot \alpha \| dt =
\int_0^1 \| H_t \| dt ,$$ where $H_t(x)=H(t,x)$ is the unique
normalized Hamiltonian function generating the path $\alpha$. Here
$H$ is said to be normalized if $\int_M H_t \omega^n=0$ for every
$t\in [0,1]$. The distance between two Hamiltonian diffeomorphisms
is given by $$ d(\psi,\varphi) := \inf {\rm length} { \{ \alpha \}
},$$ where the infimum is taken over all Hamiltonian paths $\alpha$
connecting $\psi$ and $\varphi$. It is not hard to check that $d$ is
non-negative, symmetric, and satisfies the triangle inequality.
Moreover, any pseudo-norm on the Lie algebra ${\cal A}$ that is invariant under the
adjoint action yields a bi-invariant pseudo-distance function on ${\rm Ham} (M,\omega)$, i.e.,
 $d(\psi,\phi) = d(\theta \, \psi,\theta \, \phi) = d(\psi \, \theta ,\phi \, \theta)$,
 for every  $\psi,  \phi,  \theta \in {\rm Ham} (M,\omega)$.

\smallskip

{\bf From here forth we  deal solely with such pseudo-norms
and we  refer to $d$
as the pseudo-distance generated by the pseudo-norm $\| \cdot \|$. }

\smallskip

We remark in passing that
a fruitful study of right-invariant Finsler metrics on
Ham$(M,\omega)$, motivated in part by applications to hydrodynamics,
was initiated  by Arnold~\cite{Ar}. 
In addition, non-Finslerian bi-invariant metrics on Ham$(M,\omega)$ have been
intensively studied in the realm of symplectic geometry, starting
with the works of Viterbo~\cite{V1}, Schwarz~\cite{Sch}, and
Oh~\cite{Oh}, and followed by many others. 
\begin{remark} \label{Rmk-about-continuity} {\rm
When one studies  geometric properties of the group of
Hamiltonian diffeomorphisms, it is convenient to consider smooth
paths $ [0,1] \rightarrow {\rm Ham}(M,\omega) $, among which those
that start at the identity correspond to smooth Hamiltonian flows.
Moreover, for a given Finsler pseudo-metric on $ {\rm Ham}(M,\omega)$, a natural geometric assumption is that every
smooth path $ [0,1] \rightarrow {\rm Ham}(M,\omega) $ has finite
length. As it turns out,  the latter 
assumption is equivalent to the
continuity of the pseudo-norm on ${\cal A}$ corresponding to the
pseudo-Finsler metric in the $ C^{\infty} $-topology (see~\cite{BO}).
Thus, in what follows 
we shall mainly consider such pseudo-norms.}
 \end{remark}

It is highly non-trivial to check whether a distance function on the group of Hamiltonian diffeomorphisms 
generated by  a pseudo-norm is non-degenerate, that is, $d({\rm Id},\phi)
> 0$ for $\phi \neq {\rm Id}$. In fact, for closed
symplectic manifolds, a bi-invariant pseudo-metric $d$ on
Ham$(M,\omega)$ is either a genuine metric or identically zero. This
is an immediate corollary of a well-known theorem by
Banyaga~\cite{B}, which states that Ham$(M,\omega)$ is a simple
group, combined with the fact that the null-set $${\rm null}(d) = \{
\phi \in {\rm Ham}(M,\omega) \ | \ d({\rm Id},\phi) = 0 \}$$ is a normal
subgroup of Ham$(M,\omega)$. A  renowned result by
Hofer~\cite{H} states that the $L_{\infty}$-norm on ${\cal A}$ gives
rise to a genuine distance function on Ham$(M,\omega)$ known now as
Hofer's metric. This was  proved by Hofer for the case
of ${\mathbb R}^{2n}$, then generalized by Polterovich~\cite{P}, and
finally proven in full generality by Lalonde and McDuff~\cite{LM}.
In a sharp contrast to the above, Eliashberg and
Polterovich showed in~\cite{EliP} that for a closed symplectic manifold $(M,\omega)$ ons has
\begin{theorem}[Eliashberg and Polterovich]
For $1 \leq p < \infty$,  the
pseudo-distances on ${\rm Ham}(M,\omega)$ corresponding to the
$L_p$-norms on ${\cal A}$ vanish identically.
\end{theorem}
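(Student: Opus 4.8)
The plan is to show that the pseudo-distance $d_p$ generated by the $L_p$-norm is degenerate, and then to invoke the dichotomy already recorded above: since $d_p$ is bi-invariant, its null set ${\rm null}(d_p)$ is a normal subgroup of ${\rm Ham}(M,\omega)$, so by Banyaga's simplicity theorem $d_p$ is either non-degenerate or identically zero. Hence it suffices to produce a single $\phi \neq {\rm Id}$ with $d_p({\rm Id},\phi)=0$; in fact I would aim for the stronger local statement that $d_p({\rm Id},\phi)=0$ for every $\phi$ supported in a Darboux ball, which by the fragmentation lemma generates the whole group.

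Fix such a $\phi=\phi_F^1$ with $F$ supported in a ball $B$. The key point is that $d_p$ is an \emph{infimum} of path-lengths, so I do not need a single short path but only a sequence of paths from ${\rm Id}$ to $\phi$ whose $L_p$-lengths tend to $0$. The mechanism exploits the trade-off between amplitude and support-volume that is invisible to the $L_\infty$-norm: if a Hamiltonian is distributed over $N$ pairwise disjoint regions, a single generator realizing all $N$ commuting pieces at once has $L_p$-norm scaling like $N^{1/p}$ times the per-piece norm (by additivity of $\int|\cdot|^p$ over disjoint supports and by the volume-invariance of conjugation), while subdividing the time of the flow divides the per-piece amplitude by $N$. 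Combining the two produces configurations of total $L_p$-length of order $N^{1/p-1}$, and the exponent $1/p-1<0$ --- precisely the place where $p<\infty$ is used --- drives this to $0$ as $N\to\infty$.

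The step I expect to be the main obstacle is converting these small-length configurations into genuine short paths whose endpoint is the \emph{fixed} element $\phi$. Here I would use the conjugation-invariance of $d_p$ (a consequence of bi-invariance, since $d_p({\rm Id},\psi\phi\psi^{-1})=d_p({\rm Id},\phi)$) together with a telescoping identity for the product of the $\theta$-conjugates $\theta^i\phi\theta^{-i}$, arranged so that the ``spread'' product differs from $\phi$ only by correction factors that are themselves built from disjointly supported conjugates and therefore also carry vanishing $L_p$-length. Because a fixed ball admits only finitely many disjoint copies inside the finite-volume manifold $M$, the construction cannot be run with $N\to\infty$ directly; the resolution is to first pass, via fragmentation together with subdivision of the generating isotopy into $C^1$-small steps, to pieces supported in arbitrarily small balls, for which the number of admissible disjoint copies is unbounded. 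Assembling the estimate $O(N^{1/p-1})$ with this reduction yields paths from ${\rm Id}$ to $\phi$ of arbitrarily small length, so $d_p({\rm Id},\phi)=0$, and the dichotomy then forces $d_p\equiv 0$. As a sanity check, the same computation with $p=\infty$ produces the harmless exponent $0$ and no decay, in agreement with the non-degeneracy of Hofer's metric.
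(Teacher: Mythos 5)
First, a point of reference: the survey you were given does not prove this theorem at all --- it is quoted from Eliashberg--Polterovich~\cite{EliP} as background for the uniqueness question --- so your proposal can only be measured against the original argument. Your opening reduction is correct and standard, and matches the framing in the paper: bi-invariance makes ${\rm null}(d_p)$ a normal subgroup, so by Banyaga's simplicity theorem it suffices to exhibit a single $\phi \neq {\rm Id}$ with $d_p({\rm Id},\phi)=0$. Your scaling observation is also correct as stated: $N$ disjointly supported commuting pieces can be generated simultaneously at $L_p$-cost $N^{1/p}$ times the per-piece cost, and time-subdivision makes each piece cost $O(1/N)$, so spread configurations such as $\prod_{i} \theta^i \phi_F^{1/N} \theta^{-i}$ have $L_p$-length $O(N^{1/p-1}) \to 0$.

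The gap is in the reassembly step, and it is fatal as proposed: the gains and losses cancel exactly against you. The telescoping identity you invoke reads $\phi = P_N \cdot \theta P_{N-1}^{-1}\theta^{-1}$ with $P_k = \prod_{i=0}^{k-1}\theta^i\phi\theta^{-i}$, so its correction factors are spread products of \emph{full} copies of $\phi$, not of time-subdivided pieces; the only bound your mechanism provides for those is $d_p({\rm Id},P_k) \leq k^{1/p}\,\ell_p(\phi)$, which \emph{grows} with $k$. If instead you run the telescoping with the cheap time-subdivided pieces, it reconstructs only $\phi_F^{1/N}$, at cost $O(N^{1/p-1})$; but the $N$ factors of $\phi = (\phi_F^{1/N})^N$ are all supported in the \emph{same} ball, so they cannot be generated disjointly and simultaneously, and the triangle inequality charges a factor of $N$, giving the useless bound $O(N^{1/p})$. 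No rearrangement of ``disjoint copies plus telescoping'' escapes this: such arguments only ever yield relative statements of the form $d_p({\rm Id}, \phi\cdot\theta^{N+1}\phi^{-1}\theta^{-(N+1)}) \leq 2\, d_p({\rm Id},\theta)$, in which the cost of a displacing element $\theta$, and not a decaying quantity, sits on the right. What the Eliashberg--Polterovich proof actually supplies is a second, independent mechanism: one shows that the $L_p$-\emph{displacement energy} of a fixed ball $U$ is zero, by constructing displacing isotopies whose Hamiltonians have small amplitude outside a set of arbitrarily small measure and are large only on that small set --- this is where $p<\infty$ genuinely enters, since the $L_p$-norm is blind to high values concentrated on small measure while the $L_\infty$-norm is not (exactly the dividing line with Hofer's energy--capacity inequality). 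This is then fed into the commutator inequality: if $\psi$ displaces $U$ and $\phi,\phi'$ are supported in $U$, then $d({\rm Id},[\phi,\phi']) \leq 4\, d({\rm Id},\psi)$, a bound \emph{independent} of $\phi$ and $\phi'$; zero displacement energy therefore annihilates every such commutator, and Banyaga's simplicity finishes the proof. It is precisely the absence of any $\phi$-dependent term on the right-hand side of that inequality that breaks the cancellation blocking your scheme, and your proposal contains no substitute for it.
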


The following question was asked in~\cite{EliP} (cf.~\cite{Eli,P1}):

\begin{question} \label{Ques:inv-norms} What are the ${\rm Ham}(M,\omega)$-invariant norms on ${\cal
A}$, and which of them give rise to genuine bi-invariant metrics
on ${\rm Ham}(M,\omega)$?
\end{question}

It was observed in~\cite{BO} that  any pseudo-norm $\| \cdot \|$ 
on the space ${\mathcal A}$ can be turned into a Ham$(M,\omega)$-invariant pseudo-norm via a certain 
invariantization procedure $ \| f \| \mapsto \| f \|_{\rm inv}$. 
The idea behind this procedure is based on the notion of infimal convolution (or epi-sum), from convex analysis. 
Recall that the infimal convolution of two functions $f$ and $g$ on ${\mathbb R}^n$ is defined by $(f \square g)(z)  = \inf \{ f(x)+g(y) \, | \, z=x+y\}$. 
This operator has a simple geometric interpretation: the epigraph (i.e., the set of points lying on or above the graph) of the infimal convolution of two functions is the Minkowski sum of the epigraphs of those functions. 
The invariantization $\| \cdot \|_{\rm inv}$  of $\| \cdot \|$ is obtained by taking the orbit of $\|  \cdot \|$ under the group action, and consider the infimal convolution of the associated family of norms. More preciesly, define

$$ \| f \|_{\rm inv} =  \inf \Bigl \{  \sum \|\phi_i^*  f_i \|  \ ; \ f = \sum f_i, \ {\rm and \ } \phi_i \in {\rm Ham}(M,\omega)  \Bigr \}. $$
We remark that in the above definition of $\| f \|_{\rm inv}$ the sum
$\sum f_i$ is assumed to be finite. Note that $\| \cdot \|_{\rm inv}
\leq \| \cdot \|$.  Thus, if $\| \cdot \|$ is continuous in the
$C^{\infty}$-topology, then so is $\| \cdot \|_{\rm inv}$. Moreover, if
$\| \cdot \|'$ is a Ham$(M,\omega)$-invariant pseudo-norm, then:
$$ \| \cdot \|' \leq \| \cdot \| \Longrightarrow \| \cdot \|' \leq \| \cdot \|_{\rm inv}.$$
In particular, the above invariantization procedure provides a
plethora of Ham$(M,\omega)$-invariant genuine norms on ${\mathcal
A}$, e.g., by applying it to the $\| \cdot \|_{C^k}$-norms.

\smallskip

In~\cite{OW} we made a first step toward answering Question~\ref{Ques:inv-norms} using tools from the theory of normed spaces and functional analysis. More precisely, regarding the first part of Question~\ref{Ques:inv-norms}, we proved

\begin{theorem}[Ostrover and Wagner] \label{Ham-invariant-implies-measure-invarinat}
Let $ \| \cdot \|$ be a ${\rm Ham}(M,\omega)$-invariant norm on ${\cal
A}$ such that $\| \cdot \| \leq C \| \cdot \|_{\infty}$ for some
constant $C$. Then $\| \cdot \|$ is invariant under all measure
preserving diffeomorphisms of $M$.
\end{theorem}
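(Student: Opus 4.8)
The plan is to deduce the theorem from a single \emph{density} statement: Hamiltonian rearrangements are $C^0$-dense among all measure-preserving ones, i.e.\ for any $f\in{\cal A}$ and any measure-preserving diffeomorphism $\psi$ of $M$ there exist $\phi_k\in{\rm Ham}(M,\omega)$ with $\|f\circ\phi_k^{-1}-f\circ\psi^{-1}\|_{\infty}\to 0$. Granting this, the theorem is immediate: the hypothesis $\|\cdot\|\le C\|\cdot\|_{\infty}$ means $\|\cdot\|$ is continuous in the sup-norm, so $\|f\circ\psi^{-1}\|=\lim_k\|f\circ\phi_k^{-1}\|$, while ${\rm Ham}(M,\omega)$-invariance gives $\|f\circ\phi_k^{-1}\|=\|f\|$ for every $k$; hence $\|f\circ\psi^{-1}\|=\|f\|$. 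All the work therefore goes into the density statement, which uses only that $f\circ\psi^{-1}$ is \emph{equimeasurable} with $f$ (same push-forward of $\omega^n$ to $\mathbb R$).

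The geometric engine is a permutation principle: on the connected manifold $M$, any two configurations of $m$ pairwise disjoint, equal-volume symplectic balls can be carried one onto the other by a single Hamiltonian diffeomorphism. This follows from standard symplectic flexibility---one moves the balls one at a time, routing each around the others through the available room and realizing the isotopy by a compactly supported Hamiltonian flow; composing such moves realizes an arbitrary matching $\phi(B_i)=B_i'$.

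With this in hand I would argue as follows. Write $g=f\circ\psi^{-1}$, so $f$ and $g$ are equimeasurable. Fix a fine partition of most of $M$ into $P$ disjoint equal-volume symplectic balls, sort the balls by the value of $f$ at their centers, group them into $N$ consecutive blocks of $P/N$ balls each, and let $f_\varepsilon$ equal the block-average $c_k$ of $f$ on the $k$-th block, smoothly interpolated across the thin interstitial collars. Doing the same for $g$ yields $g_\varepsilon$ with block-values $c_k'$ and \emph{exactly} $P/N$ balls per block for both functions; equimeasurability forces $c_k$ and $c_k'$ to converge to the mean of $f$ (equivalently $g$) over the same quantile layer, so $\max_k|c_k-c_k'|\to 0$. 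Since the block sizes match exactly, the permutation principle yields $\phi\in{\rm Ham}(M,\omega)$ sending the $k$-th $f$-block of balls onto the $k$-th $g$-block; then $f_\varepsilon\circ\phi^{-1}$ equals $c_k$ wherever $g_\varepsilon$ equals $c_k'$, whence $\|f_\varepsilon\circ\phi^{-1}-g_\varepsilon\|_{\infty}\le\max_k|c_k-c_k'|+o(1)\to 0$ as $\varepsilon\to 0$. Combining this with the sup-norm approximations $f_\varepsilon\to f$ and $g_\varepsilon\to g$ gives a sequence $\phi_k$ with $\|f\circ\phi_k^{-1}-g\|_{\infty}\to 0$, which is the density statement.

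The step I expect to be the main obstacle is the construction of the partition together with the \emph{uniform} error control. The sup-norm is insensitive to volume but fully sensitive to spatial transitions, so the interstitial collars between balls, and the seams where the finitely many Darboux charts are glued, must be made spatially thin rather than merely of small volume; at the same time the block counts must be kept \emph{exactly} equal for $f$ and $g$, so that the permutation principle applies with no leftover unmatched balls (a handful of which would already spoil the sup-norm estimate). Reconciling exact equal-volume bookkeeping with uniform smoothness control---so that both $f_\varepsilon\to f$ and the matching $f_\varepsilon\circ\phi^{-1}\approx g_\varepsilon$ hold in sup-norm---is the delicate heart of the argument; everything else is either routine approximation or standard symplectic flexibility.
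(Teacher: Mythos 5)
Your reduction step is fine, and it is in fact how the argument in the cited paper of Ostrover--Wagner begins (note that the survey only states this theorem and refers to~\cite{OW} for the proof): since $f\circ\phi_k^{-1}-f\circ\psi^{-1}$ lies in ${\cal A}$, the bound $\|\cdot\|\leq C\|\cdot\|_{\infty}$ plus ${\rm Ham}(M,\omega)$-invariance does reduce everything to the density statement. The genuine gap is in your proof of that statement, at its central estimate. The permutation principle controls $\phi$ only on the balls and says nothing about where the interstitial seams go: $\phi$ maps the complement of the $f$-balls onto the complement of the $g$-balls in an uncontrolled way. Take $x$ in a seam between $g$-balls of block $k$; then $\phi^{-1}(x)$ is a seam point of the $f$-decomposition, but possibly a seam adjacent to $f$-balls of a block $m$ with $c_m$ far from $c_k'$, so that $|f_\varepsilon(\phi^{-1}(x))-g_\varepsilon(x)|$ is of the order of the full oscillation of $f$. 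Hence the claimed bound $\|f_\varepsilon\circ\phi^{-1}-g_\varepsilon\|_\infty\leq\max_k|c_k-c_k'|+o(1)$ is unjustified, and making the collars \emph{spatially thin} cannot repair it: thinness controls volume, and therefore all $L_p$-norms with $p<\infty$ (this is precisely the regime of the Eliashberg--Polterovich vanishing theorem), but the sup-norm is evaluated at every seam point no matter how thin the seams are. This is not a technicality; it is exactly the $L_\infty$-versus-$L_p$ dividing line that the theorem lives on.

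A secondary but real problem is the permutation principle itself in the regime you invoke it. When the balls fill most of $M$ there is no ``available room'': a ball being routed around the others, with the others kept fixed, would have to pass through a region of smaller volume than itself, so the one-at-a-time argument collapses; one is forced into fifteen-puzzle-type moves, and the existence of the required exchanges of \emph{fat} pieces inside their union is a nontrivial symplectic question in dimension $\geq 4$ (it touches connectivity of packing spaces), not standard flexibility. Moreover, equal volume alone is genuinely insufficient: on $T^2$ the two closed halves (essential annuli of equal area) can never be exchanged by a Hamiltonian diffeomorphism, since the image of an essential circle inside one half would be disjoint from itself, contradicting non-displaceability; so any correct version of the principle must use contractibility and smallness of the pieces in an essential way. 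The known repair for both defects is to abandon an auxiliary spatial partition altogether and discretize by the level bands $f^{-1}[t_i,t_{i+1})$ and $g^{-1}[t_i,t_{i+1})$ themselves, whose volumes agree by equimeasurability: then every point of $M$, seams included, is value-compatible by construction, and the whole content of the density lemma becomes producing $\phi\in{\rm Ham}(M,\omega)$ with $\phi\bigl(f^{-1}[t_i,t_{i+1}]\bigr)\subset g^{-1}(t_i-\varepsilon,t_{i+1}+\varepsilon)$ for every $i$, the overlapping enlarged targets absorbing the topological mismatch between the bands. That containment statement is the heart of the matter, and your proposal does not yet contain an argument for it.
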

In other words, any ${\rm Ham}(M,\omega)$-invariant  norm on  ${\cal A}$ that is bounded above by the $L_{\infty}$-norm, must also be invariant  under the much larger group of measure preserving diffeomorphisms. 
%
%
Theorem~\ref{Ham-invariant-implies-measure-invarinat} plays an important role in the proof of the following result, which gives a partial answer to the second part of  Question~\ref{Ques:inv-norms}.
\begin{theorem}[Ostrover and Wagner] \label{OW-theorem}
Let $ \| \cdot \|$ be a ${\rm Ham}(M,\omega)$-invariant norm on ${\cal A}$
such that $\| \cdot \| \leq C\| \cdot \|_{\infty}$ for some constant
$C$, but the two norms are not equivalent.\footnote{Two norms are said to be equivalent
 if  ${\frac 1 C} \, \| \cdot \|_1 \leqslant \| \cdot \|_2 \leqslant C \| \cdot \|_1$ for some constant $C>0$.}
%
Then the associated
pseudo-distance $d$ on ${\rm  Ham}(M,\omega)$ vanishes identically.
\end{theorem}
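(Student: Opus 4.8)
The plan is to first strip the statement of its symplectic content and reduce it to a fact about rearrangement-invariant function norms. Since $\|\cdot\|$ is $\mathrm{Ham}(M,\omega)$-invariant and satisfies $\|\cdot\|\leq C\|\cdot\|_\infty$, Theorem~\ref{Ham-invariant-implies-measure-invarinat} shows that it is in fact invariant under \emph{every} measure-preserving diffeomorphism of $M$; consequently $\|f\|$ depends only on the distribution function of $|f|$, i.e. $\|\cdot\|$ is a rearrangement-invariant norm on $\mathcal{A}$. I would then extract from the non-equivalence hypothesis a decay of the associated fundamental function $\varphi(t):=\|\chi_E\|$ (for $|E|=t$, using smooth approximations of indicators). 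Indeed, if $\inf_t\varphi(t)=c_0>0$, then for any $f$ and any $\epsilon>0$ the pointwise bound $|f|\geq(1-\epsilon)\|f\|_\infty\,\chi_{\{|f|>(1-\epsilon)\|f\|_\infty\}}$, together with monotonicity of the norm, would give $\|f\|\geq(1-\epsilon)c_0\|f\|_\infty$, i.e. equivalence with $\|\cdot\|_\infty$. Hence non-equivalence forces $\varphi(t)\to0$ as $t\to0^+$; equivalently, any Hamiltonian supported on a set of small measure, with bounded sup-norm, has small $\|\cdot\|$-norm.

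Next I would reduce the vanishing of $d$ to a purely local statement. By Banyaga's theorem $\mathrm{Ham}(M,\omega)$ is simple, and the null set is a normal subgroup, so it suffices to prove that $d$ is degenerate; by the fragmentation lemma and the triangle inequality for the bi-invariant $d$, it is in turn enough to produce, for every $\phi$ supported in a displaceable set and every $\epsilon>0$, a Hamiltonian path from $\mathrm{Id}$ to $\phi$ of $\|\cdot\|$-length less than $\epsilon$. The construction I would use is the degeneracy mechanism of Eliashberg--Polterovich (who treated the $L_p$-norms), adapted to a general rearrangement-invariant norm: using that the ambient manifold has room for many disjoint displaced copies of the support, one distributes the generation of $\phi$ over these copies and cancels the redundant ones by a commutator/telescoping identity, so that the length of the resulting path is controlled by the sup-norm of the generating Hamiltonians times $\varphi$ of the (small) instantaneous support. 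The decay $\varphi(t)\to0$ is exactly what is needed to drive this bound to $0$.

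The step I expect to be the main obstacle is precisely this degeneracy construction for a fixed nontrivial $\phi$. The difficulty is structural: conjugation preserves the distribution of the generating Hamiltonian and hence the norm, so no single flow---however reparametrized or conjugated---can realize a fixed $\phi$ while being simultaneously low in sup-norm and supported on a set of small measure, and the naive geometric estimates stall at the scale-invariant ratio $\varphi(t)/t$, which need not tend to $0$. Overcoming this requires the algebraic displacement-and-commutator scheme rather than any direct sweeping of the support, and verifying that this scheme forces the length to $0$ using only $\varphi(t)\to0$ is the technical heart of the argument. It is instructive that the construction breaks down exactly for $\|\cdot\|_\infty$, where $\varphi\equiv1$: this is consistent with the nondegeneracy of Hofer's metric and confirms that the strict decay $\varphi(t)\to0$---i.e. the failure of equivalence with the $L_\infty$-norm---is the essential hypothesis. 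Once arbitrarily short paths are produced for all displaceable-supported $\phi$, fragmentation and bi-invariance give $d\equiv0$ on all of $\mathrm{Ham}(M,\omega)$.
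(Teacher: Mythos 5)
You should first note that the survey itself does not reproduce a proof of Theorem~\ref{OW-theorem}: it is quoted from~\cite{OW}, and the paper only records (i) that Theorem~\ref{Ham-invariant-implies-measure-invarinat} plays an important role in the proof, and (ii) the Banyaga dichotomy that a bi-invariant pseudo-metric on ${\rm Ham}(M,\omega)$ of a closed manifold is either a genuine metric or identically zero. Your architecture (upgrade to measure-preserving invariance, extract a quantitative consequence of non-equivalence with $L_\infty$, then run an Eliashberg--Polterovich-type degeneracy argument and finish with Banyaga's simplicity) is consistent with that indicated strategy. But as a proof your text has a genuine gap in each of its two halves. In the functional-analytic half, you invoke ``monotonicity of the norm'', i.e.\ the lattice property $|g|\leq |f| \Rightarrow \|g\|\leq\|f\|$, both to deduce from $\inf_t\varphi(t)>0$ that $\|\cdot\|$ is equivalent to $\|\cdot\|_\infty$, and again (implicitly, via layer-cake) when you declare the decay of $\varphi$ ``equivalent'' to smallness of the norm of any bounded Hamiltonian with small support. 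This property is not among the hypotheses, and Theorem~\ref{Ham-invariant-implies-measure-invarinat} does not supply it: invariance under measure-preserving diffeomorphisms makes the norm constant on orbits, which is rearrangement invariance, not pointwise-domination monotonicity. The step can be repaired, but only by an actual argument --- e.g.\ averaging $f$ over measure-preserving diffeomorphisms that fix a small block inside $\{f>\|f\|_\infty-\epsilon\}$ and permute the remaining blocks of a fine partition; convexity of the norm, invariance, and $C\|\cdot\|_\infty$-continuity then bound the norm of the mean-normalized bump by $\|f\|$ up to controlled error, giving both the decay and the essential monotonicity of $\varphi$, after which a Riemann-sum layer-cake yields the support-based bound you want. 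None of this is in your write-up, so the deduction ``non-equivalence $\Rightarrow$ small-support functions have small norm'' is unjustified as stated.

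In the geometric half the gap is one you yourself flag: the construction of arbitrarily short paths is described only as a ``displacement-and-commutator scheme'' and deferred as ``the technical heart'', so the theorem's conclusion is asserted rather than derived. A concrete way to close it: by Banyaga it suffices to exhibit a single nontrivial element at zero pseudo-distance; by the Eliashberg--Polterovich/Sikorav commutator lemma --- if $\theta$ displaces an open set $A$, then bi-invariance gives $d({\rm Id},[\phi,\psi])\leq 4\,d({\rm Id},\theta)$ for all $\phi,\psi$ supported in $A$ --- it is enough to show that the $d$-displacement energy of one small ball $A$ vanishes. That is where the functional-analytic input is actually consumed: one displaces $A$ by sweeping it away in thin slices, each slice-move generated by a Hamiltonian whose sup-norm is bounded independently of the slice thickness and whose support has measure tending to zero, so the total $\|\cdot\|$-length is $O\bigl(\varphi(t)\bigr)\to 0$. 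Your alternative description (``distribute the generation of a fixed $\phi$ over disjoint displaced copies and cancel by telescoping'') is not a construction, and your supporting claim --- that no flow generating a fixed $\phi$ can simultaneously have bounded sup-norm and small instantaneous support, because conjugation and reparametrization preserve distributions --- is only heuristic, since flows generating $\phi$ are not exhausted by conjugates and reparametrizations of a single one. Until the displacement (or an equivalent mechanism) is carried out and its cost estimated using only $\varphi(t)\to 0$, the proposal remains an outline of the right shape rather than a proof.
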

Although Theorem~\ref{OW-theorem} gives a partial answer to the second part of  Question~\ref{Ques:inv-norms}, prima facie, there might be ${\rm  Ham}(M,\omega)$-invariant norms on ${\cal A}$  which  are either strictly bigger than the $L_{\infty}$-norm, or  incomparable to it.
In a joint work with L. Buhovsky~\cite{BO} we showed that under the natural continuity assumption mentioned in Remark~\ref{Rmk-about-continuity} above, this cannot happen. Hence, up to equivalence of metrics,  Hofer's metric is unique. More precisely, 
%
%
%

\begin{theorem}[Buhovsky and Ostrover]  \label{Main-thm-BO} Let $(M,\omega)$ be a closed symplectic manifold.
Any $C^{\infty}$-continuous {\rm Ham}$(M,\omega)$-invariant pseudo-norm $\| \cdot \|$ on
${\mathcal A}$
is dominated  from above by the $L_{\infty}$-norm i.e., $\| \cdot \|
\leq C \| \cdot \|_{\infty}$ for some constant $C$.
\end{theorem}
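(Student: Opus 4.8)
The plan is to leverage the two hypotheses in sequence: first convert $C^{\infty}$-continuity into a concrete finite-order estimate, and then use ${\rm Ham}(M,\omega)$-invariance to strip away all but the zeroth derivative. Since $\| \cdot \|$ is a continuous seminorm on the Fr\'echet space $\mathcal{A}=C^{\infty}_0(M)$, whose topology is generated by the $C^k$-norms, continuity at the origin furnishes an integer $k$ and a constant $C_0$ with $\|f\| \le C_0 \|f\|_{C^k}$ for every $f \in \mathcal{A}$ (this is the finite-length reformulation recorded in Remark~\ref{Rmk-about-continuity}). The whole problem is thus to upgrade the right-hand side from $C^k$ to $C^0 = \| \cdot \|_{\infty}$, and I would attempt this by an induction that lowers the order of the controlling norm by one at each stage, the base case $k=0$ being exactly the assertion of the theorem.

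Before reducing the order I would localize. Using a finite atlas of Darboux charts and a subordinate partition of unity, any $f$ with $\|f\|_{\infty} \le 1$ can be written as a sum of a fixed number $N_0$ of functions, each supported in a Darboux ball and of sup-norm at most $1$, together with a single fixed-profile corrector restoring the zero-mean constraint; here $N_0$ depends only on $(M,\omega)$. By the triangle inequality it then suffices to bound $\|f\|$ for $f$ supported in one fixed Darboux ball $B \subset \mathbb{R}^{2n}$, in terms of $\|f\|_{\infty}$, with all constants depending only on the chart.

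For the order-reduction step I would exploit that the linear symplectic stretch $\Phi_{\lambda}\colon (q_1,p_1,q_2,p_2,\dots) \mapsto (\lambda q_1, \lambda^{-1}p_1, q_2, p_2, \dots)$, suitably cut off outside a slightly larger ball, lies in ${\rm Ham}(M,\omega)$, and that $\Phi_{\lambda}^{*}$ multiplies each $q_1$-derivative by $\lambda$ and each $p_1$-derivative by $\lambda^{-1}$ (analogous shears handle the mixed directions). Invariance gives $\|\Phi_{\lambda}^{*} f\| = \|f\|$, so the idea is to compose with such maps to convert a top-order derivative in one coordinate group into lower-order data --- at the unavoidable cost of dilating the support in the conjugate direction --- then to chop the dilated support into unit-size sub-boxes and reposition them, by further Hamiltonian maps, into disjoint regions of $B$. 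If this can be arranged so that each repositioned piece is controlled in $C^{k-1}$ by a universal constant times $\|f\|_{\infty}$, the inductive step closes.

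The hard part, and the place where genuinely symplectic rigidity (rather than mere volume preservation) must enter, is controlling the \emph{number} of pieces produced by this chopping. Because a stretch that lowers a $q_1$-derivative simultaneously raises the conjugate $p_1$-derivative, one cannot decrease all top-order derivatives at once; the dilation must be absorbed by subdivision, and a naive count turns a frequency-$N$ profile into $\sim N$ disjoint pieces, yielding only the useless bound $\|f\| \lesssim N \|f\|_{\infty}$. Overcoming this requires a separate key lemma asserting an $L^{\infty}$-type, maximum-over-disjoint-supports behaviour: that boundedly many simultaneously displaceable, mutually symplectomorphic copies of a bump contribute to $\| \cdot \|$ essentially like a single copy, and not additively. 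I expect this displaceability lemma --- provable directly from $C^{\infty}$-continuity together with invariance, by a cancellation argument over Hamiltonian maps that permute the copies --- to be the decisive technical ingredient. Granting it, the piece-count no longer inflates the constant, the induction on $k$ runs down to $k=0$, and one obtains $\|f\| \le C \|f\|_{\infty}$ for a constant $C$ depending only on $(M,\omega)$.
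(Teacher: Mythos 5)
Your two reduction steps (continuity gives a bound $\|\cdot\| \le C_0\|\cdot\|_{C^k}$; localization to a Darboux chart by a partition of unity) are reasonable, and the localization indeed parallels the paper's reduction to ${\rm Ham}_c(W,\omega)$-invariant pseudo-norms on $C_c^{\infty}(W)$. But the argument collapses at exactly the point you yourself identify as decisive: the ``displaceability lemma'' you postulate is false, so no cancellation argument over permuting Hamiltonian maps can prove it. Take $\|f\| = \int_M |f|\,\omega^n$, the $L^1$-norm. This is a genuine norm on ${\mathcal A}$, invariant under all of ${\rm Ham}(M,\omega)$ (indeed under all volume-preserving diffeomorphisms), continuous in the $C^{\infty}$-topology, and it satisfies the conclusion of Theorem~\ref{Main-thm-BO}, so it is a legitimate instance of everything you are allowed to assume. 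Yet for $N$ disjointly supported symplectomorphic copies $g_1,\dots,g_N$ of a bump $g$ one has $\bigl\|\sum_i g_i\bigr\|_{L^1} = N\|g\|_{L^1}$: exactly additive, not ``like a single copy''. Since a symplectomorphic copy occupies the same volume as $g$, taking $g$ with tiny support lets $N$ be arbitrarily large, so no constant independent of $N$ exists. (If instead you intend ``boundedly many'' literally, the lemma is a triviality via the triangle inequality, but then it does nothing to tame the $\sim N$ pieces produced by chopping a frequency-$N$ profile.) Hence the inductive step cannot close: for some admissible norms the piece count really does enter linearly, and $\|f\| \lesssim N\|f\|_{\infty}$ is, as you say, useless. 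A secondary problem: repositioning different pieces by different Hamiltonian maps is not norm-preserving, since invariance applies only to one global map applied to the whole function; but this is minor next to the false lemma.

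What the actual proof does is replace your $\ell^{\infty}$-over-disjoint-supports mechanism by an $\ell^1$-smallness-of-coefficients mechanism. After the same kind of localization, one fixes a finite family ${\mathcal F} = \{{\mathfrak f}_0, {\mathfrak f}_1, {\mathfrak f}_2\} \subset C_c^{\infty}(W)$ and defines the ``maximal'' invariant norm $\|\cdot\|_{{\mathcal F},\,{\rm max}}$: the infimal cost $\sum_{i,k}|c_{i,k}|$ of representing $f$ as a $C^{\infty}$-limit of finite combinations $\sum_{i,k} c_{i,k}\,\Phi_{i,k}^*{\mathfrak f}_k$ with $\Phi_{i,k} \in {\rm Ham}_c(W,\omega)$. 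Continuity of $\|\cdot\|$ is used only here (Lemma~\ref{lemma-about-max-norm}), to pass to the $C^{\infty}$-limit and conclude $\|\cdot\| \le C\,\|\cdot\|_{{\mathcal F},\,{\rm max}}$; there is no bound by a $C^k$-norm and no induction on derivative order anywhere. The heart of the matter is then to prove $\|\cdot\|_{{\mathcal F},\,{\rm max}} \le C\,\|\cdot\|_{\infty}$, in two steps: decompose any $f$ with $\|f\|_{\infty} \le 1$ into a universally bounded number $N_0$ of Hamiltonian pullbacks of rotation-invariant ``simple functions'', and then, by an averaging procedure, approximate each simple function arbitrarily well in the $C^{\infty}$-topology by sums $\sum \alpha_{i,k}\,\widetilde\Psi_{i,k}^*{\mathfrak f}_k$ with $\sum|\alpha_{i,k}| \le C\|f\|_{\infty}$. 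A high-frequency function is thus represented not by many disjoint pieces each counted with weight one, but by many terms each carrying a tiny coefficient, with total $\ell^1$-mass controlled by $\|f\|_{\infty}$, after which the triangle inequality does no harm. This is the mechanism your sketch is missing, and the $L^1$ example shows why nothing in the spirit of a disjoint-support maximum principle can substitute for it.
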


Combining Theorem~\ref{Main-thm-BO} and Theorem~\ref{OW-theorem} above, we obtain:

\begin{corollary} For a closed symplectic manifold $(M,\omega)$, any bi-invariant Finsler pseudo-metric on ${\rm Ham}(M,\omega)$,
obtained by a pseudo-norm $\| \cdot \|$ on ${\mathcal A}$ that is
continuous in the $C^{\infty}$-topology, is either identically zero, 
or equivalent 
to Hofer's metric. In particular, any
non-degenerate bi-invariant Finsler metric on ${\rm Ham}(M,\omega)$ which
is generated by a norm that is continuous in the
$C^{\infty}$-topology  gives rise to the same topology on
${\rm Ham}(M,\omega)$ as the one induced by Hofer's metric.
\end{corollary}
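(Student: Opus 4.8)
The plan is to derive the Corollary purely as a combination of Theorem~\ref{Main-thm-BO} and Theorem~\ref{OW-theorem}, together with the dichotomy recalled above --- that a bi-invariant pseudo-distance on ${\rm Ham}(M,\omega)$ is, by Banyaga's simplicity theorem, either non-degenerate or identically zero. Throughout, let $d$ be the pseudo-distance generated by the given $C^{\infty}$-continuous ${\rm Ham}(M,\omega)$-invariant pseudo-norm $\|\cdot\|$ on ${\mathcal A}$, and let $d_{\infty}$ denote Hofer's metric, i.e.\ the pseudo-distance generated by $\|\cdot\|_{\infty}$.

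First I would record the elementary monotonicity principle that a comparison of pseudo-norms transfers to a comparison of the induced pseudo-distances: if $\|\cdot\|' \leq C\|\cdot\|''$ on ${\mathcal A}$, then every Hamiltonian path has $\|\cdot\|'$-length at most $C$ times its $\|\cdot\|''$-length, and taking the infimum over all paths connecting two fixed diffeomorphisms yields $d' \leq C\,d''$. Applying Theorem~\ref{Main-thm-BO}, which gives $\|\cdot\| \leq C\|\cdot\|_{\infty}$ for some constant $C$, this already produces the upper bound $d \leq C\,d_{\infty}$.

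The heart of the argument is the reverse comparison, and here I would branch on the dichotomy. By Banyaga's theorem $d$ is either identically zero --- in which case we are in the first alternative of the Corollary and there is nothing more to prove --- or $d$ is a genuine (non-degenerate) metric. Assume the latter. I first observe that $\|\cdot\|$ must then be a genuine norm: were there a nonzero $f \in {\mathcal A}$ with $\|f\|=0$, its Hamiltonian flow $\phi_f^s$ would be a nonconstant path of zero length, so ${\rm null}(d)$ would contain the nontrivial element $\phi_f^s$; being a normal subgroup of the simple group ${\rm Ham}(M,\omega)$ it would then be all of ${\rm Ham}(M,\omega)$, forcing $d\equiv 0$, a contradiction. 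With $\|\cdot\|$ now a genuine norm satisfying $\|\cdot\| \leq C\|\cdot\|_{\infty}$ and $d\not\equiv 0$, the contrapositive of Theorem~\ref{OW-theorem} applies and yields that $\|\cdot\|$ is equivalent to $\|\cdot\|_{\infty}$. Feeding this two-sided norm comparison back through the monotonicity principle of the previous paragraph gives $c\,d_{\infty} \leq d \leq C\,d_{\infty}$ for suitable constants $c,C>0$, i.e.\ $d$ is equivalent to Hofer's metric, which is the second alternative. The ``in particular'' clause is then immediate: a non-degenerate metric is by definition not identically zero, so it falls into the equivalent case, and bi-Lipschitz equivalent metrics induce the same topology.

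I expect the genuine mathematical depth to reside entirely in the two cited theorems, so the main obstacle at the level of this Corollary is essentially organizational: bridging the gap that Theorem~\ref{OW-theorem} is stated for honest norms while the Corollary is phrased for pseudo-norms. The kernel-elimination step above, via the normality of ${\rm null}(d)$ and Banyaga's simplicity theorem, is precisely what closes this gap and upgrades the norm-level rigidity of Ostrover--Wagner to the pseudo-norm setting; the one small point that needs care is verifying that a nonzero normalized $f$ indeed generates a nontrivial time-$s$ flow, so that ${\rm null}(d)$ is genuinely a nontrivial normal subgroup.
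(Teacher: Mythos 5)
Your proposal is correct and follows essentially the same route as the paper, which obtains the Corollary precisely by combining Theorem~\ref{Main-thm-BO} with Theorem~\ref{OW-theorem} via the Banyaga dichotomy for bi-invariant pseudo-metrics. Your kernel-elimination step (showing that in the non-degenerate case the pseudo-norm is in fact a genuine norm, so that Theorem~\ref{OW-theorem} applies) carefully fills a detail the paper leaves implicit, but it does not change the underlying argument.
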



In the rest of this section we briefly describe the strategy of the proof of
Theorem~\ref{Main-thm-BO} in the two-dimensional case. For the proof of the general case see~\cite{BO}. 
We start with two straightforward reduction steps. 
First, for technical reasons, 
%
%
%
we shall consider 
pseudo-norms on the space
$C^{\infty}(M)$,  instead of the space ${\mathcal A}$.
The original claim will follow, 
since any
Ham$(M,\omega)$ invariant pseudo-norm $\| \cdot \|$ on ${\mathcal
A}$ can be naturally extended to an invariant pseudo-norm $\| \cdot
\|'$ on $C^{\infty}(M)$ by 
$$\| f \|' := \| f- M_f \|, \ {\rm where \ } M_f = {\textstyle {\frac 1 {\rm Vol(M)}} \int_M f \omega^n}.$$ Note that if
$\| \cdot \|$ is continuous in the $C^{\infty}$-topology, then so is
$\| \cdot \|'$, and that the two norms coincide 
on the space ${\mathcal A}$. Second, by using a standard
partition of unity argument, we can reduce the proof of Theorem~\ref{Main-thm-BO}
to a ``local result", i.e., 
it is sufficient to prove the
theorem for Ham$_c(W,\omega)$-invariant pseudo-norms
on the space of compactly supported smooth functions $C_c^{\infty}(W)$, where $W=(-L,L)^{2}$ is an open 
square in ${\mathbb R}^{2}$ (see~\cite{BO} for the details).

\smallskip

The next step, which is one of the key ideas of the proof, is to define the ``largest possible"  Ham$_c(W,\omega)$-invariant norm on the space of compactly supported smooth functions $C_c^{\infty}(W)$. To this end, 
we fix a (non-empty) finite collection of functions ${\mathcal F} \subset C_c^{\infty}(W)$, and define: 
%
%
\begin{multline*}
 {\cal L}_{\mathcal F} := \Bigl \{ \sum_{i,k} c_{i,k} \, \Phi_{i,k}^* {f}_i \ | \ c_{i,k} \in {\mathbb R},
\ \Phi_{i,k} \in {\rm Ham}_c(W,\omega),  \\  \ {f}_i \in {\mathcal F},\  
{\rm and} \ \# \{(i,k) \, | \, c_{i,k} \neq 0 \} < \infty \Bigr  \}.
\end{multline*}
%
%
%
%
We equip  the space  ${\cal L}_{\mathcal F} $ with the norm $$ \| f \|_{{\cal L}_{{\mathcal F}}} = \inf
\sum |c_{i,k}|,$$ where the infimum is taken over all the
representations $f = \sum c_{i,k} \, \Phi_{i,k}^* {f}_i$ as above.
\begin{definition}
For any compactly supported function $ f \in C_c^{\infty}(W) $, let
\begin{equation*} 
 \| f \|_{{\cal F}, \, {\rm max}} = \inf \big\{ \liminf_{i \rightarrow \infty} \| f_i
\|_{{\mathcal L}_{{\mathcal F}}} \big\} ,\end{equation*} where the infimum is
taken over all subsequences $\{f_i\}$ in $ {\cal L}_{\mathcal F} $ which
converge to $f$ in the $C^{\infty}$-topology. As usual, the infimum
of the empty set is set to be $+ \infty$.
\end{definition}

The main feature of the norm $\| \cdot \|_{{\cal F}, \, {\rm max}}$ is 
that it dominates from above any other Ham$_c(W,\omega)$-invariant
pseudo-norm that is continuous in the
$C^{\infty}$-topology.  
\begin{lemma} \label{lemma-about-max-norm}
Let ${\mathcal F} \subset C_c^{\infty}(W)$ be a non-empty finite
collection of smooth compactly supported functions in $W$. Then  any
{\rm Ham}$_c(W,\omega)$-invariant pseudo-norm $ \| \cdot \| $ on $
C_c^\infty(W) $ that is continuous in the $ C^\infty $-topology
satisfies $$ \| \cdot \| \leqslant C \| \cdot \|_{{\cal F}, \, {\rm max}},$$
for some absolute constant $C$.
\end{lemma}

\begin{proof}[\bf Proof of Lemma~\ref{lemma-about-max-norm}]
Since the collection ${\mathcal F}$ is finite, set $C = \max \{ \| g \|  ; \, g \in {\mathcal F} \}$. For any $f =
\sum c_{i,k} \, \Phi_{i,k}^*  f_i \in {\cal L}_{\mathcal F}$, one
has
\begin{equation} \label{simpel-estimate1}  \|f \| \leq   \sum |c_{i,k}| \| \Phi_{i,k}^* f_i \|  \leq  C \sum |c_{i,k}|. 
\end{equation}
By the definition of $\| \cdot \|_{{\mathcal L}_{{\mathcal F}}}$, this immediately implies that $\|f\| \leq  C  \| f \|_{{\mathcal L}_{{\mathcal F}}}$.
The lemma now follows by combining~$(\ref{simpel-estimate1})$, the
definition of  $ \| \cdot \|_{{\cal F}, \, {\rm max}} $, and the fact that
the pseudo-norm $ \| \cdot \| $ is assumed to be continuous in the $
C^\infty $-topology.
\end{proof}

\smallskip

The next step, which is the
main part of the proof, is to show that for a suitable collection of
functions ${\mathcal F} \subset C_c^{\infty}(W)$, the norm $\| \cdot
\|_{{\mathcal F}, \, {\rm max}}$ is in turn bounded from above by the
$L_{\infty}$-norm. 
In light of the above, this would complete the proof of Theorem~\ref{Main-thm-BO} in the two-dimensional case. 
%
%
%
%
%

\smallskip

There are two independent components  in the
proof of this claim. First, we show that one can
decompose any $f \in C_c^{\infty}(W^2)$ with $\| f \|_{{\infty}} \leqslant 1$
into a finite combination $f = \sum_{i=1}^{N_0} \epsilon_j \Psi^*_j g_j$. Here, $ \epsilon_j \in \{ -1, 1 \} $, $\Psi_j \in {\rm Ham}_c(W^2,\omega)$, and $g_j$ are smooth rotation-invariant 
functions  whose $L_{\infty}$-norm is bounded by an absolute
constant, and
%
which satisfy certain other technical conditions (see
Proposition 3.5 in~\cite{BO} for the precise
statement). In what follows we call such functions ``simple
functions". We emphasize that $N_0$ is a constant independent of
$f$. Thus, we can restrict ourselves to the case where $f$ is a
``simple function''. In the second part of the proof, we construct
an explicit collection ${\mathcal F} = \{ {\mathfrak f_0},
{\mathfrak f_1} , {\mathfrak f_2} \} $, where ${\mathfrak f_i} \in
C_c^{\infty}(W^2),$ and $i=0,1,2$. 
Using an averaging procedure
(see the proof of Theorem 3.4 in~\cite{BO}), one can show that every
``simple function'' $f \in C_c^{\infty}(W^2)$ can be approximated
arbitrarily well in the $C^{\infty}$-topology by a sum of the form
$$ \sum_{i,k} \alpha_{i,k} \widetilde \Psi_{i,k}^* {\mathfrak f}_{k}, \ {\rm where \ }  \widetilde \Psi_{i,k} \in {\rm Ham}_c(W^2,\omega), \ k \in \{0,1,2 \}, $$ 
and such that $\sum | \alpha_{i,k} | \leq C \| f \|_{\infty}$ for
some absolute constant $C$. Combining this with the above definition
of $\| \cdot \|_{{\mathcal F}, \, {\rm max}}$, we conclude that
$ \| f \|_{{\mathcal F}, \, {\rm max}} \leq C \| f \|_{\infty}$ for every
$f \in C_c^{\infty}(W^2)$. Together with Lemma~\ref{lemma-about-max-norm}, this completes the proof of
Theorem~\ref{Main-thm-BO} in the 2-dimensional case.

\section{Some Open Questions and Speculations} \label{SEC:OQ}

\smallskip

\noindent{\bf Do symplectic capacities coincide on  the class of convex domains?} 
%
As mentioned above, since the time of Gromov's original work, a variety of symplectic capacities have been constructed and the
relations between them often lead to the discovery of surprising connections between symplectic geometry and Hamiltonian dynamics.
In the two-dimensional case, Siburg~\cite{Sib} showed that any symplectic capacity of  a compact connected domain with smooth boundary $\Omega \subset {\mathbb R}^2$ equals its Lebesgue measure. 
In higher dimensions symplectic capacities do not coincide in general. 
A 
theorem  by Hermann~\cite{Her} states that for any $n \geq 2$ there
is a bounded star-shaped 
domain $S \subset {\mathbb R}^{2n}$  with
cylindrical capacity $\overline c(S) \geq 1$, and arbitrarily small Gromov
radius $\underline c(S)$.  Still, for large classes of sets in
${\mathbb R}^{2n}$, including ellipsoids, polydiscs and convex Reinhardt
domains, all symplectic capacities coincide~\cite{Her}. 
In~\cite{V} Viterbo showed that for any bounded convex subset $\Sigma$
 of ${\mathbb R}^{2n}$ one has  $\overline c(\Sigma) \leq 4n^2 \underline c(\Sigma)$. 
Moreover, one has (see~\cite{Her,H2,V}) the following: 
\begin{conjecture} \label{conj-all-cap-coincide}
For any convex domain $\Sigma$ in
${\mathbb R}^{2n}$ one has $\underline c(\Sigma) = \overline c(\Sigma)$.
\end{conjecture}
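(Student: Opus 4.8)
The inequality $\underline c(\Sigma) \le \overline c(\Sigma)$ always holds, since the Gromov radius and the cylindrical capacity are, respectively, the smallest and the largest symplectic capacities. Hence the entire content of the conjecture lies in the reverse inequality $\overline c(\Sigma) \le \underline c(\Sigma)$, and since every capacity is squeezed between these two, it suffices to exhibit a single capacity that simultaneously realizes both sides. By Theorem~\ref{Cap_on_covex_sets} the natural candidate is the Ekeland--Hofer--Zehnder capacity, so the plan is to prove the two separate estimates
$$ \underline c(\Sigma) \ge c_{_{\rm EHZ}}(\Sigma) \ \ {\rm and} \ \ \overline c(\Sigma) \le c_{_{\rm EHZ}}(\Sigma). $$
Each of these, combined with the automatic chain $\underline c \le c_{_{\rm EHZ}} \le \overline c$, forces an equality, and together they give $\underline c = c_{_{\rm EHZ}} = \overline c$.

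After a routine approximation reducing to the case of a smooth strictly convex $\Sigma$, Theorem~\ref{Cap_on_covex_sets} lets me fix a closed characteristic $\widetilde\gamma \subset \partial\Sigma$ of minimal action with $c_{_{\rm EHZ}}(\Sigma) = A(\widetilde\gamma)$. I would attack the inner estimate $\underline c(\Sigma) \ge A(\widetilde\gamma)$ through a symplectic neighborhood theorem for $\widetilde\gamma$: the characteristic foliation equips a tube around $\widetilde\gamma$ with an $S^1$-family of transverse disks of area $A(\widetilde\gamma)$, and the aim is to straighten these into action--angle coordinates in which a round ball $B^{2n}(r)$ with $\pi r^2 = A(\widetilde\gamma)$ is inscribed in $\Sigma$. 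This is the more tractable half, and is consistent with the equalities already known for ellipsoids, polydiscs, and convex Reinhardt domains (see~\cite{Her}).

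The outer estimate $\overline c(\Sigma) \le A(\widetilde\gamma)$ is the crux, and I expect it to be the main obstacle. It demands a nonlinear symplectic embedding of the entire body $\Sigma$ into a cylinder $Z^{2n}(r)$ with $\pi r^2 = A(\widetilde\gamma)$ --- in effect, folding the full $2n$-dimensional geometry of $\Sigma$ into the single symplectic $2$-plane selected by the minimal characteristic. No general construction for such embeddings is available, and the quantitative record shows how far the problem is from resolved: Viterbo's bound $\overline c(\Sigma) \le 4n^2\,\underline c(\Sigma)$ is dimension-dependent, while even the complex-symmetric refinement $\overline c(K) \le \frac{4}{\pi}\,\underline c(K)$ from Lemma~\ref{lem-complex-symetric} stops short of the sharp constant $1$. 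A more realistic route may avoid explicit embeddings altogether: perform symplectic reduction along $\widetilde\gamma$, or exploit the billiard and Lagrangian-product description of Theorem~\ref{Main-Theorem-From-AAO1} in the product case, so as to reduce the outer bound to a lower-dimensional statement amenable to induction on $n$. The persistent difficulty --- and the place where I expect any such attempt to stall --- is controlling the coupling between the minimal characteristic and the transverse symplectic directions, which is precisely the mechanism generating the $n$-dependent gap that has so far resisted removal.
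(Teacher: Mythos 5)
This statement is not a theorem in the paper at all: it is Conjecture~\ref{conj-all-cap-coincide}, presented in Section~\ref{SEC:OQ} explicitly as an \emph{open problem} (indeed one stronger than Viterbo's Conjecture~\ref{iso-per-conj}), so there is no proof in the paper to compare against, and no correct blind proof should be expected. Your proposal, to your credit, ends by conceding that it stalls; as it stands it is a proof plan with two unproven halves, each of which is essentially as open as the conjecture itself. Reducing to the Ekeland--Hofer--Zehnder capacity and splitting into $\underline c(\Sigma) \ge c_{_{\rm EHZ}}(\Sigma)$ and $\overline c(\Sigma) \le c_{_{\rm EHZ}}(\Sigma)$ is a reasonable reformulation (this is how experts think of the problem), but it does not advance matters: the automatic chain $\underline c \le c_{_{\rm EHZ}} \le \overline c$ is the only part that is known.

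The concrete gap I want to flag is your treatment of the ``inner'' estimate, which you call the more tractable half. The argument you sketch --- straightening a tubular neighborhood of the minimal closed characteristic $\widetilde\gamma$ into action--angle coordinates so that a ball $B^{2n}(r)$ with $\pi r^2 = A(\widetilde\gamma)$ sits inside $\Sigma$ --- would fail. A symplectic neighborhood of a closed characteristic is a thin solid torus, $S^1$ times a transverse disk of \emph{small} area; nothing forces $\Sigma$ to contain transverse symplectic disks of area anywhere near $A(\widetilde\gamma)$, and the Gromov radius is governed by embedded balls, not by the action of boundary characteristics. Embedding a ball of capacity $c_{_{\rm EHZ}}(\Sigma)$ into a convex $\Sigma$ is an open problem in its own right; the best general statements in this direction are exactly the dimension-dependent or constant-factor results the paper records (Viterbo's $\overline c(\Sigma) \le 4n^2\,\underline c(\Sigma)$, Lemma~\ref{lem-complex-symetric}, Theorem~\ref{up-to-uni-constnat}), together with the special classes (ellipsoids, polydiscs, convex Reinhardt domains) where all capacities are known to coincide. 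So both halves of your plan, not only the outer one, are where the difficulty lives, and the proposal should be read as a restatement of the conjecture rather than progress toward it.
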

This conjecture is particularly challenging due to the scarcity of examples of convex domains in which capacities have been  computed. 
Moreover, note that Conjecture~\ref{conj-all-cap-coincide} is stronger than Viterbo's conjecture (Conjecture~\ref{iso-per-conj} above), as the latter holds trivially for
the Gromov radius.

\smallskip

A somewhat more modest question in this direction is whether Conjecture~\ref{conj-all-cap-coincide} holds asymptotically, i.e., whether there is an absolute constant $A$ such that for any convex domain $K \subset
{\mathbb R}^{2n}$ one has $\overline c(K) \leq A \, \underline c(K)$.  It would be interesting to explore whether methods from asymptotic geometric analysis can be used to answer this question. 
\bigskip

\noindent{\bf Are Hanner polytopes in fact symplectic balls in disguise?}
Recall that Mahler's conjecture states that the minimum possible Mahler volume is attained by a hypercube. 
It is interesting to note that the corresponding product configuration, when looked at through symplectic glasses,  is in fact a Euclidean ball in disguise. More precisely, it was proved in \S 7.9 of~\cite{Schle} (cf. Corollary 4.2 in~\cite{LMS})  that the interior of the product of a hypercube $Q \subset {\mathbb R}^n_q$ and its dual body, the cross-polytope $Q^{\circ} \subset {\mathbb R}^n_p$, is symplectomorphic to the interior of a Euclidean ball $B^{2n}(r)  \subset {\mathbb R}^n_q \times  {\mathbb R}^n_p$ with the same volume.
On the other hand, as mentioned in Section~\ref{SEC:MAHLER} above, if Mahler's conjecture holds, then there are other minimizers for the Mahler volume  aside of the hypercube. 
For example, consider the class of Hanner polytopes. A $d$-dimensional centrally symmetric polytope $P$ is
a Hanner polytope if either $P$ is one-dimensional (i.e., a symmetric interval), or $P$ 
is the free sum or direct product of two (lower dimensional) Hanner
polytopes $P_1$ and $P_2$.
Recall that the free sum of two polytopes, $P_1 \subset {\mathbb R}^n$, $P_2 \subset {\mathbb R}^m$ is  a $n+m$ polytope defined by $P_1 \oplus P_2 = {\rm Conv} ( \{ P_1 \times \{0\} \}  \cup \{  \{0\} \times P_2 \}) \subset {\mathbb R}^{n+m}$. 
It is not hard to check (see e.g.~\cite{SR}) that  the volume product of the cube is the same as that of Hanner polytopes. Thus every Hanner polytope is also a candidate for a minimizer of the volume
product among symmetric convex bodies. In light of the above mentioned result from~\cite{Schle}, a natural question is the following: 
%
%
%
%
\begin{question} Is every Hanner polytope a symplectic image of a Euclidean ball?
\end{question}
More precisely, is the interior of every Hanner polytope  symplectomorphic to the interior of a Euclidean ball with the same volume?
A negative answer to this question would give a counterexample to Conjecture~\ref{conj-all-cap-coincide} above, since it would show that the Gromov radius must be different from the Ekeland-Hofer-Zehnder capacity. 

%

%

\bigskip

\noindent{\bf Symplectic embeddings of Lagrangian products.}
Since Gromov's work~\cite{Gr}, questions about symplectic embeddings have lain at the heart
of symplectic geometry (see e.g.,~\cite{B1,B3,Gu,Hu2, LMS, Mcd2,McPol,McSch, Schl1,Schle}). 
  These questions are usually notoriously difficult, and up to date most results concern  only the embeddings of balls, ellipsoids and polydiscs. 
Note that even for this simple class of examples, only 
recently  has it  become possible to specify exactly when a four-dimensional ellipsoid is embeddable in a ball (McDuff and Schlenk~\cite{McSch}), or in another four-dimensional ellipsoid (McDuff~\cite{Mcd2}).
For some other related works we refer the reader to~\cite{BH,CCFHR,CGK,FM, HK,HL,Op1}.
\smallskip

Since symplectic capacities can naturally be used to detect symplectic embedding obstructions, and in light of the results mentioned in Section~\ref{SEC:MAHLER} (in particular, Theorem~\ref{Main-Theorem-From-AKO}), 
it is only natural to try to extend the above list of currently-known examples, and study symplectic embeddings of convex ``Lagrangian products" in 
the classical phase space. The main advantage of this class of bodies is that the action spectrum can be computed via billiard dynamics. This 
property would presumably make it easier to compute or estimate the Ekeland-Hofer capacities~\cite{EH}, 
or Hutchings' embedded contact homology capacities~\cite{Hu1,Hu2}, in this setting. 
A natural first step in this direction would be to consider the embedding of the Lagrangian product of two balls into a Euclidean ball.
More precisely, 
%
%
%
%
%
to study the  function $\sigma: {\mathbb N} \rightarrow {\mathbb R}$ defined by 
$$ \sigma(n) = \inf  \bigl \{a \ |  B^n_q(1)  \times B^n_p(1)  \stackrel{\rm symp} \hookrightarrow B^{2n}(a)  \bigr \}.$$
To the best of the author's knowledge, the value of $\sigma(n)$ is unknown already for the case  $n=2$.

\bigskip

\noindent{\bf Acknowledgement:}
 I am  deeply indebted to Leonid Polterovich for generously sharing his insights and perspective on topics related to this paper, as well as for many inspiring conversations throughout the years. I have also benefited significantly from an ongoing collaboration with Shiri Artstein-Avidan, I am grateful to her for many stimulating and enjoyable hours working together. I would also like to thank Felix Schlenk and Leonid Polterovich for their valuable comments on an earlier draft of this paper.
%
%

%

%


\end{document}